\documentclass[11pt,reqno]{amsart}
\usepackage{amsthm}
\usepackage{amssymb}
\usepackage{latexsym}
\usepackage{multicol}
\usepackage{verbatim,enumerate}
\usepackage[usenames]{color}
\usepackage[all, poly]{xy}
\usepackage[utf8]{inputenc}
\usepackage{hyperref}
\usepackage{amsmath, amscd}
\usepackage{soul}
\usepackage{tikz}
\usetikzlibrary{matrix,arrows,decorations.pathmorphing}
\usetikzlibrary{arrows}
\allowdisplaybreaks
\usepackage{dirtytalk,caption,subcaption}
\usetikzlibrary{positioning}
\usepackage{float}
\usepackage{wasysym}

\numberwithin{equation}{section} % optional

\newtheorem{thm}{Theorem}
\newtheorem{prop}{Proposition}

\newtheorem{cor}{Corollary}

\theoremstyle{definition}
\newtheorem{defn}{Definition}
\newtheorem{example}{Example}

\theoremstyle{remark}
\newtheorem{rem}{Remark}

\newcommand{\wt}{\operatorname{wt}}

\newcommand{\supp}{\operatorname{supp}}

\newcommand{\RAAG}{\operatorname{RAAG}}

\advance\textwidth by 1.2in  \advance\oddsidemargin by -.6in \advance\evensidemargin by -.6in

\newcounter{cnt}
 \makeatletter
\def\mydggeometry{\makeatletter\dg@YGRID=1\dg@XGRID=20\unitlength=0.003pt\makeatother}
\makeatother \theoremstyle{remark}

\numberwithin{equation}{section}
\makeatletter
\def\section{\def\@secnumfont{\mdseries}\@startsection{section}{1}%
  \z@{.7\linespacing\@plus\linespacing}{.5\linespacing}%
  {\normalfont\scshape\centering}}
\def\subsection{\def\@secnumfont{\bfseries}\@startsection{subsection}{2}%
  {\parindent}{.5\linespacing\@plus.7\linespacing}{-.5em}%
  {\normalfont\bfseries}}
\makeatother

\begin{document}

\title[]{Multivariate growth series of graph products of groups}

\author{Chaithra Pilakkat}
\address{Technical University of Munich, TUM School of Computation, Information and Technology, Department of Mathematics, Boltzmannstra\ss e~3, 85748 Garching bei M\"unchen, Germany.}
\email{chaithra.pilakkat@tum.de}
\thanks{}

\author{Venkatesh Rajendran}
\address{Department of Mathematics, Indian Institute of Science, Bangalore 560012, India}
\email{rvenkat@iisc.ac.in}
\thanks{RV's work is partially supported by CRG/2023/008780 of of ANRF and DST FIST
program-2021 [TPN-700661].
}

\begin{abstract}
Right-angled Artin groups (RAAGs) and right-angled Coxeter groups (RACGs) associated with finite simple graphs are fundamental objects in geometric group theory. Their one-variable growth series with respect to the standard generating sets was classically expressed by Chiswell in terms of the one-variable independence polynomial of the defining graph \cite{Chiswell94} with suitable substitutions of the variable.

In this paper, we investigate the multivariate growth series of graph products of groups and derive explicit formulas in terms of the multivariate independence polynomial of the underlying graph through suitable substitutions of variables. As special cases, we obtain multivariate growth series formulas for RAAGs and RACGs, thereby extending the classical one-variable identities.

We further show that the coefficients of the multivariate growth series of RAAGs and RACGs admit explicit descriptions in terms of the double-marked and marked chromatic polynomials of graphs. This connection reveals a rich interplay between growth series and graph coloring invariants. In particular, we obtain completely explicit formulas for all the coefficients in the case of chordal graphs, which 
include, for example, trees and complete graphs.\end{abstract}

\maketitle

\section{Introduction}

 Let \(G\) be a group generated by a finite symmetric generating set \(S\), that is,
\(S=S^{-1}\). For an element \(g\in G\), let
$\ell_S(g)$
denote the \emph{word length} of \(g\) with respect to \(S\), namely the minimum
number of generators from \(S\) required to express \(g\). The
\emph{growth series} of \(G\) with respect to \(S\) is the formal power series given by
\[
G(t)=\sum_{g\in G} t^{\ell_S(g)}.
\]
One of the central questions in algebraic combinatorics is to understand the growth series of groups, including determining when these series are rational and finding explicit formulas for them.

Let $\mathcal{G}$ be a finite simple graph. It is well known that the growth series of the right angled Artin group $RAAG({\mathcal{G}})$ and the right angled Coxeter group $RACG({\mathcal{G}})$ with respect to the standard sets of generators are very explicitly given in terms of one-variable independence polynomial of $\mathcal{G}$, see \cite[Prop. 17.4.2]{Davis} and \cite{Chiswell94, Amri-Athreya}. More precisely, we have 
$$RAAG(\mathcal{G})(t) = I_{\mathcal{G}}\left(\frac{-2t}{1+t}\right)^{-1} $$
where $I_{\mathcal{G}}(t)=\sum_{k\ge 0}a_kt^k$ is the one-variable independence polynomial of $\mathcal{G}$, and $a_k$ counts the number of independent or stable subsets of $\mathcal{G}$ of size $k$ for all $k\ge 0.$
Similarly, we have 
\[
RACG({\mathcal{G}})(t)
=I_{\mathcal{G}}\!\left(\dfrac{-t}{1+t}\right)^{-1}
\] for the right angled Coxeter group $RACG_{\mathcal{G}}$.

The main goal of this paper is to study the multivariate version of the growth series of right angled Artin groups and right angled Coxeter groups and write them in terms of multivariate independence  polynomial of
$\mathcal{G}$ with suitable substitutions of variables. For this purpose, we consider the graph products of groups which was first introduced by  Green \cite{Green} and consider their  multivariate  growth series. Graph products of groups constitute one of the most important classes of groups in geometric group theory. 
We note that the multivariate case for right-angled Coxeter groups was first accomplished in our previous  work \cite{CDV}. In that paper, our framework relied on the connection between right-angled Coxeter groups and the universal enveloping algebras of partially commutative Lie superalgebras, which allowed us to explicitly calculate the Hilbert series via the marked independence series of the underlying simple graph. In the present work, we expand this scope significantly by developing a purely group-theoretic approach that applies to all graph products of groups.  The one variable growth series of the graph products of groups with respect to the standard generating sets were classically expressed by Chiswell in terms of the one-variable independence polynomial of the defining graph, see \cite{Chiswell94}. We obtain multivariate generalizations of Chiswell formulas in this paper. 

To state our result, first we recall the definition of the graph products of groups from Green \cite{Green}. Given a simple graph $\mathcal{G}$ with a vertex set $I$ and an edge set $E$, along with a collection of vertex groups $\{G_i\}_{i \in I}$, the graph product of groups $\Gamma := \Gamma(\mathcal{G}, \{G_i\}_{i\in I})$ is defined as the quotient of the free product   $\ast_{i \in I} G_i$ by the normal subgroup that forces elements from distinct groups $G_i$ and $G_j$ to commute whenever $(i,j) \notin E$, that is $i$ and $j$ are not adjacent. In particular, graph products define a class of group products that naturally interpolate between free products and direct products. The graph products of groups naturally includes the right-angled Artin group $RAAG({\mathcal{G}})$ when every vertex group is infinite cyclic group ($G_i=\mathbb{Z}$, $i\in I$) and it includes the right-angled Coxeter group $RACG({\mathcal{G}})$ when every vertex group is of order two ($G_i=\mathbb{Z}/2\mathbb{Z}$, $i\in I$). 

Our main result (Theorem \ref{mainthm}) provides an explicit, unified formula for the multivariate growth series $H_{\Gamma}(\mathbf{x})$ of any graph product of groups, assuming only that the vertex groups possess well-defined weight functions. The definition of the multivariate growth series $H_{\Gamma}(\mathbf{x})$ can be found in Section \ref{growthdef}. We prove that $H_{\Gamma}(\mathbf{x})$ is completely determined by the multivariate independence series of $\mathcal{G}$ and the individual growth functions $H_i(x_i)$ of each vertex group ${G}_i$. More precisely, we have 
$$H_{\Gamma}(\mathbf{x}) = \left( \sum_{S \in \mathcal{I}(\mathcal{G})} \prod_{i \in S} \left( \frac{1}{H_i(x_i)} - 1 \right) \right)^{-1}
$$
where $\mathcal{I(G)}$ denotes the collection of all stable subsets of $\mathcal{G}$.

By applying suitable variable substitutions to this single formula, we obtain the multivariate formula for graph products of various interesting families of groups. Specifically, by substituting the growth function of the infinite cyclic group ($H_i(x_i) = \frac{1+x_i}{1-x_i}$), we obtain the multivariate growth series for right-angled Artin groups:
\[
H_{\text{RAAG}({\mathcal{G}}})(\mathbf{x}) = I_{\mathcal{G}}\left(\frac{-2\mathbf{x}}{1+\mathbf{x}}\right)^{-1}
\]
Similarly, by evaluating the formula with the growth function of the cyclic group of order two ($H_i(x_i) = 1+x_i$), we  recover the multivariate growth series for right-angled Coxeter groups:
\[
H_{\text{RACG}({\mathcal{G}})}(\mathbf{x}) = I_{\mathcal{G}}\left(\frac{-\mathbf{x}}{1+\mathbf{x}}\right)^{-1}
\]

The second part of the paper focuses on expressing the growth series of RAAGs and RACGs in terms of double-marked and marked chromatic polynomials of the underlying simple graphs. In particular, we write down explicit formulas for the growth series of RAAGs and RACGs when the underlying simple graphs are chordal. These formulas seem to be new to the literature as far as we know. Very explicitly, we have the following expression for the coefficients of the growth series when the underlying graph $\mathcal{G}$ is chordal:
\begin{align*}
H_{\RAAG(\mathcal G)}(\mathbf x)[\bold x^{\bold m}] &= \sum\limits_{\boldsymbol{\lambda}\in S(\bold m)}2^{L(\boldsymbol{\lambda})}(-1)^{|\mathbf{m}|+\sum_{j\in \mathrm{supp}(\bold{m})}\ell(\boldsymbol{\lambda}_{j})}\left(\prod _{j\in \mathrm{supp}(\bold{m})} \binom{\sum\limits_{i\in \mathcal{G}_j}\ell(\boldsymbol{\lambda}_{i})}{\ell(\boldsymbol{\lambda}_{j})} \frac{\ell(\boldsymbol{\lambda}_{j})!}{\prod_{k=1}^{\infty}(d^{\boldsymbol{\lambda}_{j}}_{k}!)}\right)\\
H_{RACG(\mathcal{G})}(\bold x) [\bold x^{\bold m}] &=  \sum\limits_{\boldsymbol{\lambda}\in S(\bold m)}(-1)^{|\mathbf{m}|+\sum_{j\in \mathrm{supp}(\bold{m})}\ell(\boldsymbol{\lambda}_{j})}\left(\prod _{j\in \mathrm{supp}(\bold{m})} \binom{\sum\limits_{i\in \mathcal{G}_j}\ell(\boldsymbol{\lambda}_{i})}{\ell(\boldsymbol{\lambda}_{j})} \frac{\ell(\boldsymbol{\lambda}_{j})!}{\prod_{k=1}^{\infty}(d^{\boldsymbol{\lambda}_{j}}_{k}!)}\right)
\end{align*}

here we fix the perfect elimination ordering on $I$ and  set $\mathcal{G}_r$ by the subgraph of $\mathcal{G}$ induced by the set of vertices 
 $\{i\in I\setminus \{r\}: \{i,r\}\in E, i<r\}\cup \{r\}$. For other notations used in these formulas, we
 refer the readers to look at the Section \ref{sect5.3}. The second formula on the growth series of the right angled Coxeter groups already appears in our previous work \cite{CDV}, we stated here for completeness. 
We remark that since our main theorem Theorem \ref{mainthm} places no structural restrictions on the vertex groups, it allows for the explicit computation of growth series for mixed graph products, such as those combining different types of cyclic groups or Coxeter groups. A detail study of these types of products of groups will be carried out in our future work. 

The paper is organized as follows. In Section \ref{sect2}, we fix notation and cover the necessary preliminaries on independence series, graph products, normal forms and growth series. In Section \ref{sect3}, we state and prove our main theorem regarding the multivariate growth series. In Section \ref{sect4}, we examine several consequences of our formula, explicitly deriving the growth series for right-angled Artin and Coxeter groups.
In Section \ref{sect5}, we define the double marked and marked chromatic polynomials and express the double marked and marked chromatic polynomials in terms of the ordinary chromatic polynomials of simple graphs. Using the results of Section \ref{sect5}, we give explicit formulas for the growth series of right angled Artin groups and right angled Coxeter groups when the underlying simple graphs are $PEO$-graphs (which includes the chordal graphs) in Section \ref{sect6}. At the end, in Section \ref{sect7}, we present our formulas explicitly for complete graphs, trees, paths, star graphs, and edgeless graphs.

\section{Preliminaries}\label{sect2}

We first fix some notation.
For a set \(I\), denote \(\mathbb{Z}_{\ge 0}^{\,I}\) by the set of all finitely supported tuples of non-negative integers indexed by \(I\). Denote
$\mathcal{P}_{\mathrm{mult}}(I)$ by the collection of all finite multi-subsets of \(I\). For a multi-subset $M\in \mathcal{P}_{\mathrm{mult}}(I)$, we denote $\sigma(M)$ by the underlying set of $M$. 
We work over a characteristic zero field $\mathbb F$ and denote the formal power series algebra generated by 
 the set of distinct commuting variables $\{x_i : i\in I\}$ by $\mathcal R =\mathbb F[[x_i: i\in I]]$.  For \(\mathbf{m}=(m_i : i\in I)\in \mathbb{Z}_{\ge 0}^{\,I}\) and \(f\in \mathcal R\), we write $$\text{
$
f[\mathbf{x}^{\mathbf{m}}]$
for the coefficient of
$\mathbf{x}^{\mathbf{m}}
=
\prod_{i\in I} x_i^{m_i}$ in \(f\),\ $ \text{and set} \ |\mathbf{m}| = \sum_{i \in I} m_i$}.$$
We also use $\supp(\mathbf{m}) = \{i \in I : m_i > 0\}$ for the support of given tuple $\bold m\in \mathbb Z_{\ge 0}^I$.
We fix a simple graph $\mathcal G=(I, E)$ with vertex set $I$ and edge set $E$ throughout the paper. \textit{We assume that $I$ is countable, and we identify $I$ with $\{1, \dots, n\}$ when $I$ is finite and having cardinality $n.$ }

\subsection{Graph Products of Groups}
Let $\{G_i : i\in I\}$ be a collection of groups; not necessarily non-isomorphic.  The graph product $\Gamma(\mathcal G, \{G_i\}_{i\in I}) $ is defined by 
\begin{equation*}
 \Gamma:=\Gamma(\mathcal G, \{G_i\}) = \left( \ast_{i \in I} G_i \right) \Big/ \langle [G_i, G_j]  \text{ whenever } (i,j) \notin E \rangle,
\end{equation*}
where $\left( \ast_{i \in I} G_i \right) $ is the free products of the groups $G_i, i\in I$, and  $\langle [G_i, G_j] \text{ whenever } (i,j) \notin E \rangle$ is the normal subgroup of 
$\left( \ast_{i \in I} G_i \right) $ generated by $[G_i, G_j] \text{ for } (i,j) \notin E$.

\textit{Our convention: Throughout this paper, we adopt the convention that, in the graph product of groups $\Gamma$, the elements of $G_i$ and $G_j$ commute whenever the vertices $i$ and $j$ are not adjacent in $\mathcal G$. This convention differs from that used in some parts of the literature, where the commuting relation is imposed whenever $i$ and $j$ are adjacent in $\mathcal{G}$. }

\noindent
\textit{
We adopt our convention because it naturally relates the growth series of graph products of groups to the independence series of $\mathcal{G}$, allowing us to express the growth series in terms of generalized chromatic polynomials of $\mathcal{G}$. The two conventions are equivalent up to replacing the underlying graph by its complement. Consequently, all results stated using one convention can be translated immediately into the other by passing to the complement graph, and no ambiguity should arise. }

\begin{example} We have the following two major examples:
\begin{enumerate}
    \item     \emph{the right angled Artin group} $RAAG(\mathcal G)$ associated to $\mathcal{G}$ is obtained from by taking $G_i = \mathbb Z$ for all $i \in I$, and
\item  \emph{the right angled  Coxeter group} $RACG(\mathcal G)$ associated to $\mathcal{G}$ is obtained from by taking $G_i = \mathbb Z/2\mathbb Z$ for all $i \in I$.
\end{enumerate}
\end{example}

\subsection{Normal form an element of graph products}
To establish a normal form for elements in a graph product—analogous to reduced 
words in free products—one uses the notion of a reduced word.

\begin{defn} 
    \begin{enumerate}
        \item Elements of each vertex group $G_i, i\in I$ are called \emph{syllables}.
        \item  A \emph{word} of an element $\gamma$ in $\Gamma$ is a finite sequence 
$\bold w = (g_1, g_2, \dots, g_k)$ of elements chosen from the vertex groups $g_i\in G_i, 1\le i\le k$, and  the sequence represents the product 
$g_1 g_2 \cdots g_k=\gamma \in \Gamma$. In this case, we say $\bold w$ represent $\gamma$ and we call $k$ as the length of $\bold w$.
%\item For $\gamma\in \Gamma$, a chosen word of gamma is denoted as $\bold w(\gamma)$. Note that we can have $\bold w\neq \bold w'$ such that $\bold w(\gamma)=\bold w'(\gamma)$.
\item Let $\bold w = (g_1, \dots,g_i, g_{i+1}, \dots, g_k)$ be a word. Suppose $g_i\in G_u$, $g_{i+1}\in G_v$ such that $\{u, v\}\notin E$, then word $\bold w' =  (g_1, \dots, g_{i+1}, g_i, \dots, g_k)$ is said be obtained from $\bold w$ by
 syllable shuffling. 
 \item More generally, $\bold w'$ is said to be obtained by $\bold w$ by  syllable shuffling if there is a sequence syllable shuffling operations as in (3) are applied on $\bold w$ to obtain $\bold w'.$
    \end{enumerate} 
\end{defn}

\subsection{}
Let $\gamma\in \Gamma$, we denote by $W(\gamma)$ %= \{\bold w =   (g_1, \dots, g_k):  g_1 g_2 \cdots g_k=\gamma\}$
the set of all words that represent $\gamma. $ We can perform the following elementary operations
on words representing $\gamma$:
\begin{enumerate}
    \item[(I)] Removing any syllable $g_i$ if $g_i = 1$, where $1$ is the identity element of the appropriate group. 
    \item[(II)] If two consecutive syllables $g_i$ 
    and $g_{i+1}$ belong to the same vertex group $G_v$, replace them with their 
    product $g_i g_{i+1} \in G_v$.
    \item[(III)]  If two consecutive syllables $g_i \in G_u$ 
    and $g_{i+1} \in G_v$ correspond to non-adjacent vertices ($\{u, v\} \notin E$), 
    interchange their order to $(g_{i+1}, g_i)$. This is same as syllable shuffling.
\end{enumerate}
Note that the resulting word obtained from applying any 
finitely many sequence of operations from $(I) - (III)$ (in any order) of a given word from $W(\gamma)$ must be again in $W(\gamma)$. We can start with an arbitrary word from $W(\gamma)$ and perform these operations and obtain 
a reduced word.
\begin{defn}
    A word $\bold w\in W(\gamma)$ is defined to be \emph{reduced} if its length cannot be shortened by applying any 
 operations from $(I) - (III)$.
Equivalently, $\bold w = (g_1, \dots, g_r)$ is reduced if 
\begin{itemize}
    \item $g_i\neq 1$ for all $1\le i\le k$
    \item whenever $i_s = i_t$ with $s < t$, there exists $u$, $s < u < t$, such that
\begin{equation*}
(i_s, i_u) \in E.
\end{equation*}
\end{itemize}
\end{defn}

The fundamental structural properties of these reduced words are governed 
by the following result, due to Green \cite{Green}.

\begin{prop}[The Normal Form]
Let $\Gamma$ be the graph products of a collection of vertex groups 
$\{G_i\}_{i\in I}$. Every non-trivial element $g$ in $\Gamma$ can be 
expressed as a product $$g = g_1 \cdots g_k,$$ where $\bold w = (g_1, \dots, g_k)$ 
is a reduced word. Furthermore, any other reduced word representing $g$ obtained from $\bold w$
by syllable shufflings.

\end{prop}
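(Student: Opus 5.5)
Existence is a length-reduction argument. Let $g\neq 1$ and take a word $\mathbf w=(g_1,\dots,g_k)\in W(g)$ of \emph{minimal length} among all words representing $g$; such a word exists because $\ast_{i\in I}G_i\to\Gamma$ is surjective, so $W(g)\neq\emptyset$. I claim $\mathbf w$ is reduced. First, no syllable is trivial, since operation (I) would give a shorter word. Now suppose $g_s,g_t$ with $s<t$ lie in the same vertex group $G_v$ and every $g_u$ with $s<u<t$ lies in some $G_{i_u}$ with $\{v,i_u\}\notin E$. Then operation (III) lets us move $g_t$ leftwards past $g_{t-1},\dots,g_{s+1}$ until it is adjacent to $g_s$. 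Operation (II) then merges the two into $g_sg_t$, producing a word in $W(g)$ of length $k-1$, which contradicts minimality. Hence $\mathbf w$ is reduced.

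Uniqueness up to shuffling is the real content. I would follow Green's strategy and build a faithful action of $\Gamma$ on a set of normal forms. Let $\mathcal W$ be the set of reduced words modulo syllable shuffling. For each $i$ and $h\in G_i$, define $\phi_h:\mathcal W\to\mathcal W$ as follows. Given a class $[\mathbf w]$, ask whether some representative begins with a syllable of $G_i$. If none does, set $\phi_h[\mathbf w]=[(h,\mathbf w)]$. If one does, write it as $(a,\mathbf w')$ with $a\in G_i$ and set $\phi_h[\mathbf w]=[(ha,\mathbf w')]$, deleting $ha$ when $ha=1$.

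The steps are then as follows.
\begin{enumerate}
\item Show $\phi_h$ is well defined. This needs two facts: whether the class has a representative starting with a $G_i$-syllable does not depend on the representative, and the leading $G_i$-syllable $a$ is uniquely determined. For the second, two distinct $G_i$-syllables can both be shuffled to the front only if their relative order can be swapped, which the reduced condition forbids. Also check that the output is again reduced.
\item Check $\phi_h\phi_{h'}=\phi_{hh'}$, so that each $G_i$ acts on $\mathcal W$.
\item Check $\phi_h\phi_k=\phi_k\phi_h$ for $h\in G_i$, $k\in G_j$ with $\{i,j\}\notin E$. This is a short case analysis on whether $G_i$- and $G_j$-syllables can be shuffled to the front.
\end{enumerate}
By the universal property of the free product and of the quotient defining $\Gamma$, these give an action of $\Gamma$ on $\mathcal W$. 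Applying $g=g_1\cdots g_k$ to the empty word and inducting on $k$, we get $g\cdot[\emptyset]=[(g_1,\dots,g_k)]$ for any reduced word. So two reduced words representing the same $g$ define the same class in $\mathcal W$, that is, they differ by syllable shuffling.

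The main obstacle is step 1, the well-definedness of $\phi_h$ on shuffle classes. I would handle it with a combinatorial lemma in the style of Cartier–Foata partially commutative monoids. In a reduced word, the syllables that can be moved to the front are exactly those with no earlier syllable adjacent in $\mathcal G$ and no earlier syllable from the same vertex group. This set is visibly invariant under a single shuffle, hence under the whole shuffle class. It is also what guarantees that $(h,\mathbf w)$ or $(ha,\mathbf w')$ remains reduced.
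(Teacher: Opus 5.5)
The paper gives no proof of this proposition. It quotes the result from Green and uses it as a black box, so there is no in-text argument to compare yours against. Your route is a standard and sound way to prove it: existence via a word of minimal length, and uniqueness via an action of $\Gamma$ on shuffle classes of reduced words, in the style of van der Waerden's normal-form proof for free products. As written, however, Step~1 omits the verification that well-definedness of $\phi_h$ actually depends on.

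Knowing that the leading $G_i$-syllable $a$ is unique is not enough. You also need that if $(a,\mathbf w')$ and $(a,\mathbf w'')$ both lie in $[\mathbf w]$, then $[(ha,\mathbf w')]=[(ha,\mathbf w'')]$. When $ha=1$ this is the cancellation property $[\mathbf w']=[\mathbf w'']$. Your lemma on front-movable syllables does not give it. The other fact you list, that having a representative with a leading $G_i$-syllable is independent of the representative, is automatic, since the condition is phrased in terms of the whole class.

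To fill the gap, track occurrences.
\begin{enumerate}
\item Within a shuffle class of reduced words, a shuffle never exchanges two syllables of the same vertex group, since such syllables are never adjacent.
\item Hence the leading $G_i$-syllable of any representative is the same occurrence, namely the first $G_i$-occurrence.
\item Follow that occurrence along a chain of shuffles from $(a,\mathbf w')$ to $(a,\mathbf w'')$.
\item Delete it from every word of the chain. A shuffle not involving it stays a legal shuffle, because the two swapped syllables remain adjacent; a shuffle involving it becomes an equality. The chain now runs from $\mathbf w'$ to $\mathbf w''$, so $\mathbf w'\sim\mathbf w''$.
\item Replacing the occurrence by $ha$ instead of deleting it works the same way, because whether a swap is legal depends only on the vertex labels.
\end{enumerate}

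Two smaller repairs are also needed. In the existence argument, take $t$ to be the next occurrence of $v$ after $s$. Otherwise some $g_u$ with $s<u<t$ may lie in $G_v$, and the vacuous condition $\{v,v\}\notin E$ does not let you commute two $G_v$-syllables. Also set $\phi_1=\mathrm{id}$, since your first case would otherwise output the non-reduced word $(1,\mathbf w)$. You need this anyway to get $\phi_{h^{-1}}\phi_h=\mathrm{id}$, and hence a homomorphism $G_i\to\mathrm{Sym}(\mathcal W)$.

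Once $\phi_h$ is well defined, Steps~2 and~3 and the final induction on $k$ go through as you describe.
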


\subsection{Growth Series of graphs products of groups}\label{growthdef} 

Let $G$ be a group, not necessarily finite. A function $w: G\to \mathbb Z_{\ge 0}$ is said to be a weight function of $G$ if it satisfies the following conditions:
\begin{align*}
&(1) \ \ w(1)=0,\qquad
(2)\ \ w(g)>0 \text{ for } g\neq 1,\qquad
(3)\ \ w(g^{-1})=w(g)\ \text{for all } g\in G,\\
&(4)\ \ w(gh)\le w(g)+w(h)\ \text{for all } g,h\in G,\qquad
(5)\ \ \{g\in G\mid w(g)\le N\}\ \text{is finite for every } N.
\end{align*}
   \medskip
   \noindent
We fix a weight function $w_i$ on $G_i$ for each $i\in I. $
Define the growth series in one variable $x_i$ of $G_i$ with respect to the weight function $w_i$ by
\begin{equation*}
H_i(x_i) = \sum_{g \in G_i} x_i^{w_i(g)}.
\end{equation*}
Note that $H_i(x_i)\in \in \mathbb F[[x_i]]$ because of the condition $(5)$. Set $F_i(x_i) = H_i(x_i)-1$. 
 Let $\gamma\in \Gamma$ and $\bold w(\gamma)= (g_1, \dots, g_r)$
be a reduced word representing $\gamma$. 
Define
\begin{equation*}
x^\gamma := \prod_{j=1}^r x_{i_j}^{w_{i_j}(g_j)}
\end{equation*}
It is easy to see that $x^\gamma$
is well-defined as any two reduced words representing $\gamma$ differ by only sequence of  syllable shufflings.
Define the multivariate growth series of $\Gamma$ to be
\begin{equation*}
H_{\Gamma}(\mathbf{x}) = \sum_{\gamma \in \Gamma} x^\gamma \in \mathcal{R}.
\end{equation*}
Since $w(1)=0$ for the identity element $1\in \Gamma$, we have that $H_{\Gamma}(\mathbf{x})$ is invertible in $\mathcal{R}$, in particular we can talk about $H_{\Gamma}(\mathbf{x})^q$ for any integer $q\in \mathbb Z.$
Our main aim of this article to compute $H_{\Gamma}(\mathbf{x})$ in terms of the growth series of $G_i's.$

\section{Growth series formula}\label{sect3}
The following is the main theorem of this paper.
\begin{thm}\label{mainthm}
    Let $\mathcal{G}=(I,E)$ be a simple graph, and let $\mathcal{I}(\mathcal{G})$ denotes the set of all stable subsets of $\mathcal{G}$. The multivariate growth series $H_{\Gamma}(\mathbf{x})$ of the graph product $\Gamma =  \Gamma(\mathcal G, \{G_i\}_{i\in I}) $ is given by
\begin{equation}\label{equationthm}
H_{\Gamma}(\mathbf{x}) = \left( \sum_{S \in \mathcal{I}(\mathcal{G})} \prod_{i \in S} \left( \frac{1}{H_i(x_i)} - 1 \right) \right)^{-1}.
\end{equation}
%\textit{Equivalently,}
%\begin{equation*}
%H_{\Gamma}(\mathbf{x}) = \left( \sum_{S \in\mathcal{I}(\mathcal{G})} \prod_{i \in S} \frac{-F_i(x_i)}{1 + F_i(x_i)} \right)^{-1}.
%\end{equation*}
\end{thm}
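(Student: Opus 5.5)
Write $F_i = H_i - 1$, the generating function of the nontrivial elements of $G_i$. I will prove the equivalent identity
\[
H_\Gamma(\mathbf{x})\cdot \sum_{S\in\mathcal I(\mathcal G)}\prod_{i\in S}\left(\frac{-F_i(x_i)}{1+F_i(x_i)}\right)=1 .
\]
The plan is to reduce it to a Möbius/Cartier--Foata type cancellation on the trace monoid underlying the normal form.

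\textbf{Step 1 (trace monoid model).} By the Normal Form proposition, elements $\gamma\in\Gamma$ correspond bijectively to shuffle classes of reduced words. Equivalently, they correspond to elements of the partially commutative monoid on the alphabet $A=\bigsqcup_i (G_i\setminus\{1\})$, with letters from $G_u$ and $G_v$ commuting when $\{u,v\}\notin E$, restricted to the \emph{reduced} traces: no two letters from the same $G_i$ can be brought adjacent. The monomial $x^\gamma$ is then multiplicative along the trace.

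\textbf{Step 2 (recursion by first letters).} For $\gamma\neq 1$, let $\mathrm{In}(\gamma)\subseteq I$ be the set of vertices $i$ such that some reduced word for $\gamma$ begins with a syllable in $G_i$. Distinct such initial syllables pairwise commute, so $\mathrm{In}(\gamma)$ is a stable set. Fix a stable set $S$, and for each $i\in S$ choose a nontrivial $h_i\in G_i$. Left multiplication by $\prod_{i\in S}h_i$ either appends, cancels, or merges the syllables in the $G_i$-initial positions. I will organise this as a signed sum:
\[
H_\Gamma\cdot\sum_{S}\prod_{i\in S}\frac{-F_i}{1+F_i}.
\]
The cleaner route is to expand $\frac{-F_i}{1+F_i}=\sum_{k\ge1}(-1)^kF_i^k$, which is valid as a power series since $F_i$ has no constant term. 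Then the left-hand side becomes a signed sum over pairs $(\gamma, T)$, where $T$ is a ``heap'' assigning to each $i$ in a stable set $S$ a nonempty sequence of nontrivial elements of $G_i$.

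A more robust option, which I would try first, is to avoid sequences: define $K_i=\{g\in G_i\}$ and note the identity $\frac{-F_i}{1+F_i}=\frac{1}{H_i}-1$. It then suffices to prove
\[
\sum_{S\in\mathcal I(\mathcal G)}\prod_{i\in S}\left(1-H_i\right)\cdot\prod_{i\in S}H_i^{-1}\cdot H_\Gamma=1 .
\]

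\textbf{Step 3 (factorisation).} For a stable set $S$, let $\Gamma^S$ be the set of $\gamma$ none of whose reduced words begins with a syllable from $G_i$, $i\in S$; that is, $\mathrm{In}(\gamma)\cap S=\emptyset$. The key lemma is that every $\gamma$ factors uniquely as $\gamma=\big(\prod_{i\in S}g_i\big)\gamma'$ with $g_i\in G_i$ (possibly trivial) and $\gamma'\in\Gamma^S$, with weights adding: $x^\gamma=\prod_i x_i^{w_i(g_i)}\,x^{\gamma'}$. This holds because the $G_i$, $i\in S$, pairwise commute and pull to the front independently. Hence
\[
H_\Gamma=\prod_{i\in S}H_i\cdot H_{\Gamma^S},
\]
and the required identity becomes
\[
\sum_{S\in\mathcal I}\prod_{i\in S}\left(\frac1{H_i}-1\right)H_\Gamma=\sum_{S}\sum_{T\subseteq S}(-1)^{|S\setminus T|}\prod_{i\in T}H_i^{-1}\,H_\Gamma .
\]
Regroup by $T$: the term $\prod_{i\in T}H_i^{-1}H_\Gamma$ equals $H_{\Gamma^T}$, so the sum becomes $\sum_{T}H_{\Gamma^T}\sum_{S\supseteq T,\,S\text{ stable}}(-1)^{|S\setminus T|}$. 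It is cleaner still to write it as $\sum_{T\in\mathcal I}(-1)^{|T|}\big(H_{\Gamma}-\text{stuff}\big)$.

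I would instead use inclusion–exclusion directly. Note that $\prod_{i\in T}(1-H_i)\cdot H_{\Gamma^T}$ is, up to sign, the generating function of the $\gamma$ with $T\subseteq\mathrm{In}(\gamma)$. That is, by the factorisation with all $g_i\neq1$,
\[
\sum_{\gamma:\,T\subseteq \mathrm{In}(\gamma)}x^\gamma=\prod_{i\in T}F_i\cdot H_{\Gamma^T}.
\]
Therefore
\[
\sum_{T\in\mathcal I}\prod_{i\in T}(-F_i)H_{\Gamma^T}=\sum_\gamma x^\gamma\sum_{T\subseteq\mathrm{In}(\gamma)}(-1)^{|T|}=1,
\]
since the inner sum vanishes unless $\mathrm{In}(\gamma)=\emptyset$, i.e.\ $\gamma=1$. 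Substituting $H_{\Gamma^T}=H_\Gamma\prod_{i\in T}H_i^{-1}$ gives exactly \eqref{equationthm}.

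\textbf{Main obstacle.} The main obstacle is the factorisation lemma of Step 3: uniqueness of $(g_i)_{i\in S},\gamma'$ and additivity of weights. I would prove it from the Normal Form proposition. Given a reduced word for $\gamma$, shuffle to the front a syllable of $G_i$ for each $i\in S\cap\mathrm{In}(\gamma)$; these commute since $S$ is stable. The remaining word is reduced and begins with no $G_i$-syllable for $i\in S$, since two $G_i$ syllables with no adjacent vertex between them would contradict reducedness. Uniqueness follows because shuffles preserve the multiset of syllables and $\mathrm{In}$ is shuffle-invariant. Conversely, for $\gamma'\in\Gamma^S$, prepending nontrivial $g_i$ yields a reduced word, which gives additivity of $x^\gamma$. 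Finally, the case where $I$ is infinite is handled coefficientwise, since each monomial involves finitely many variables.
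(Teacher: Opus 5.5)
Your proof is correct, but it takes a different route from the paper's.

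\textbf{The paper's route.} The paper expands $\frac{1}{H_i}-1=\sum_{r\ge1}(-1)^rF_i^r$. This turns $H_\Gamma D$ into a signed sum over pairs $(\gamma,U)$, where $U$ attaches a nonempty tuple of nontrivial syllables to each vertex of a stable set $S$. It then builds an explicit sign-reversing, weight-preserving involution $\Phi$. With $i_0=\min\bigl(A(\gamma,U)\cup S\bigr)$, $\Phi$ either strips the initial $G_{i_0}$-syllable off $\gamma$ and prepends it to the tuple at $i_0$, or moves the first entry of that tuple back to the front of $\gamma$. Most of the work is verifying $L(\gamma',U')=L(\gamma,U)$, so that $\Phi$ is an involution.

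\textbf{Your route.} You never pass to tuples. Your factorisation lemma $\Gamma=\bigl(\prod_{i\in T}G_i\bigr)\cdot\Gamma^T$, a weight-multiplicative bijection, is the graph-product analogue of the parabolic decomposition $W=W_J\cdot W^J$. It gives at once $H_{\Gamma^T}=H_\Gamma\prod_{i\in T}H_i^{-1}$ and $\sum_{\gamma:\,T\subseteq\mathrm{In}(\gamma)}x^\gamma=\prod_{i\in T}F_i\cdot H_{\Gamma^T}$. The cancellation is then just $\sum_{T\subseteq X}(-1)^{|T|}=0$ for the nonempty stable set $X=\mathrm{In}(\gamma)$, a weighted form of Cartier--Foata M\"obius inversion.

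\textbf{Comparison.} Your approach puts all the group theory into one lemma proved directly from the normal form, makes the cancellation trivial, and yields $H_\Gamma=\prod_{i\in T}H_i\cdot H_{\Gamma^T}$ as a by-product. The paper's involution is more laborious but cancels the expansion term by term, bijectively.

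\textbf{For a write-up.} Drop the abandoned detours in Step 2 and the regrouping over stable supersets $S\supseteq T$. In the lemma, state two points explicitly. First, each vertex has at most one initial syllable, since two occurrences of the same vertex in a reduced word can never be shuffled past each other. Second, $j\in T\setminus\mathrm{In}(\gamma)$ cannot lie in $\mathrm{In}(\gamma')$, because the prefix $\prod_{i\in T} g_i$ commutes with $G_j$.
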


\begin{proof}
    Set
\begin{equation*}
D(\mathbf{x}) = \sum_{S \in \mathcal I(\mathcal G)} \prod_{i \in S} \left( \frac{1}{H_i(x_i)} - 1 \right).
\end{equation*}
It suffices to prove
\begin{equation*}
H_{\Gamma}(\mathbf{x})D(\mathbf{x}) = 1.
\end{equation*}
%Expand
%\begin{equation*}
%\frac{1}{H_i(x_i)} - 1 = \frac{-F_i(x_i)}{1 + F_i(x_i)} = \sum_{r \ge 1} (-1)^r F_i(x_i)^r.
%\end{equation*}
Recall that we have $F_i(x_i) = H_i(x_i) - 1$ and $\frac{1}{H_i(x_i)} - 1 = \frac{-F_i(x_i)}{1 + F_i(x_i)} = \sum_{r \ge 1} (-1)^r F_i(x_i)^r$. 
A monomial of $F_i(x_i)^r$ (for $r\ge1$) corresponds to an ordered tuple
\begin{equation*}
u_i = (h_{i,1}, \dots, h_{i,r}), \quad h_{i,j} \in G_i \setminus \{1\}.
\end{equation*}
Thus a term of
\begin{equation*}
H_{\Gamma}(\mathbf{x})D(\mathbf{x})
\end{equation*}
is represented by a pair $(\gamma, U)$ where $\gamma \in \Gamma$ and $U$ assigns to every vertex of an independent set $S \in  \mathcal I(\mathcal G)$ a nonempty tuple $u_i$ of the above form, for each $i\in S$.
For any such pair $(\gamma, U)$, we can define its sign and weight:
\begin{itemize}
    \item \textbf{Sign:} $\mathrm{Sign}(\gamma,U)$= $(-1)^{\sum_{i \in S} |u_i|},$ where $|u_i|$ denote the length of the tuple $u_i$
    \item \textbf{Weight:} $\mathrm{wt}(\gamma,U) = x^\gamma \prod_{i \in S} \prod_{h \in u_i} x_i^{w_i(h)}$
\end{itemize}

Let $\mathrm{Init}(\gamma)$ be the set of vertices $i$ such that some reduced word of $\gamma$ begins with a syllable from $G_i$. We can then define the following sets:
\begin{equation*}
A(\gamma, U) = \{i \in \mathrm{Init}(\gamma) : \{i\} \cup S \in \mathcal{I}(G)\}
\end{equation*}
\begin{equation*}
L(\gamma, U) = A(\gamma, U) \cup S
\end{equation*}
where $S$ is the independent set corresponding to $U$.
If $L(\gamma, U) = \emptyset$, it follows that $\gamma = 1$ and $U = \emptyset$. \textit{
We repeatedly use the following key observation without mentioning further:
let \(i\in \operatorname{Init}(\gamma)\), then there is a unique syllable
\(g_i(\gamma)\in G_i\setminus\{1\}\) such that some reduced word for
\(\gamma\) begins with \(g_i(\gamma)\). Moreover, if
\(i,j\in\operatorname{Init}(\gamma)\) and \(i\ne j\), then the vertex groups
\(G_i\) and \(G_j\) commute.
}

Let $M_\Gamma$ denote the set of all $(\gamma,U)$ such that $L(\gamma,U)\neq \emptyset$.
To proceed, fix a total order on the vertex set $I$. For any nontrivial pair $(\gamma, U)$, we define the minimal element:
\begin{equation*}
i_0 = \min L(\gamma, U).
\end{equation*}

\item[Case 1: $i_0 \in A(\gamma,U)$.] 
    Since $i_0 \in \mathrm{Init}(\gamma)$, we can fix a reduced word $w$ for $\gamma$ of the form
    \begin{equation*}
        w = (g_1, g_2, \dots, g_n), \quad \text{where } g_1 \in G_{i_0} \setminus \{1\}.
    \end{equation*}

    We shift this first syllable out of the group element by setting $\gamma' = g_1^{-1}\gamma$ (so that $\gamma'$ has the reduced expression $(g_2, \dots, g_n)$), and update the tuple configuration $U$ to $U'$ according to the following rules:
    \begin{itemize}
        \item If $i_0 \notin S$, we add $i_0$ to the  independent set $S$ by setting $S' = S \cup \{i_0\}$ and initializing the singleton tuple $u'_{i_0} = (g_1)$. The fact that $S' \in  \mathcal I(\mathcal G)$ follows directly from the definition of $A(\gamma,U)$.
        \item If $i_0 \in S$ and $u_{i_0} = (h_1, h_2, \dots, h_r)$, we prepend $g_1$ to the existing tuple, replacing $u_{i_0}$ with $u'_{i_0} = (g_1, h_1, h_2, \dots, h_r)$.
    \end{itemize}

    \item[Case 2: $i_0 \notin A(\gamma,U)$.] 
    By the definition of $L(\gamma, U)$, this assumption implies that $i_0 \in S$. Let $u_{i_0} = (h_1, h_2, \dots, h_r)$. We shift the first element $h_1$ from the tuple to the front of the group element by setting 
    \begin{equation*}
        \gamma' = h_1\gamma.
    \end{equation*}
    The tuple configuration is then updated to $U'$ by altering the component at $i_0$:
    \begin{itemize}
        \item If $r > 1$, we replace $u_{i_0}$ with the truncated tuple $u'_{i_0} = (h_2, \dots, h_r)$.
        \item If $r = 1$, we remove the vertex $i_0$ from the active independent set, mapping $S' = S \setminus \{i_0\}$. Also this $S'\in \mathcal{I(G)}$.
    \end{itemize}
    To see that this step preserves the structural properties of our framework, let $w = (g_1, g_2, \dots, g_n)$ be our fixed reduced word for $\gamma$. We claim that the concatenated word $w' = (h_1, g_1, g_2, \dots, g_n)$ is a reduced word for $\gamma'$. Indeed, if $w'$ were not reduced, there would exist some index $k$ such that $g_k \in G_{i_0} \setminus \{1\}$ commutes with all $g_j$ for $1 \le j < k$. This would imply $i_0 \in \mathrm{Init}(\gamma)$. Because $i_0 \in S$ and $S \in  \mathcal I(\mathcal G)$, it follows that $\{i_0\} \cup S = S \in  \mathcal I(\mathcal G)$, meaning $i_0 \in A(\gamma,U)$, which  contradicts our hypothesis. Consequently, $w'$ is a valid reduced expression for $\gamma'$, and we obtain:
    \begin{equation*}
        x^{\gamma'} = x_{i_0}^{w_{i_0}(h_1)} x^\gamma.
    \end{equation*}

In both cases, let $U'$ denote the resulting tuple configuration, which is now indexed by the updated independent set $S' \in \mathcal{I}(\mathcal{G})$. We define the map $\Phi:M_\Gamma\to M_\Gamma$ by 
\begin{equation*}
    \Phi(\gamma,U) = (\gamma',U').
\end{equation*}
To see that $\Phi$ reverses the sign, observe that in each subcase, the total number of elements across all active tuples changes by exactly $\pm 1$. In Case 1, a syllable ($g_1$) is appended to the configuration $U'$, either by activating a new vertex index or by increasing the length of an existing tuple. Conversely, in Case 2, a syllable ($h_1$) is removed from the configuration $U$, either by tuple truncation or by the complete deactivation of the vertex index $i_0$. It follows that $\sum_{i \in S'} |u'_i| = \sum_{i \in S} |u_i| \pm 1$, yielding
\begin{equation*}
\operatorname{Sign}(\gamma',U') = (-1)^{\sum_{i \in S'} |u'_i|} = (-1)^{\left(\sum_{i \in S} |u_i|\right) \pm 1} = -\operatorname{Sign}(\gamma,U).
\end{equation*}

To demonstrate weight preservation, we monitor the simultaneous changes in the group element monomial and the tuple product. In Case 1, the removal of the initial syllable $g_1 \in G_{i_0}$ from $\gamma$ reduces the group element weight by a factor of $x_{i_0}^{w_{i_0}(g_1)}$, so that $x^{\gamma'} = x_{i_0}^{-w_{i_0}(g_1)} x^\gamma$. Concurrently, the addition of $g_1$ to the tuple configuration at $i_0$ scales the tuple product precisely by $x_{i_0}^{w_{i_0}(g_1)}$. Combining these modifications, we obtain
\begin{equation*}
\operatorname{wt}(\gamma',U') = \left(x_{i_0}^{-w_{i_0}(g_1)} x^\gamma\right) \cdot \left( x_{i_0}^{w_{i_0}(g_1)} \prod_{i \in S} \prod_{h \in u_i} x_i^{w_i(h)} \right) = \operatorname{wt}(\gamma,U).
\end{equation*}
In Case 2, shifting $h_1 \in G_{i_0}$ to the front of the group element scales its weight by a factor of $x_{i_0}^{w_{i_0}(h_1)}$, yielding $x^{\gamma'} = x_{i_0}^{w_{i_0}(h_1)} x^\gamma$. Concurrently, removing $h_1$ from the tuple configuration eliminates its corresponding weight from the product, which implies
\begin{equation*}
\operatorname{wt}(\gamma',U') = \left(x_{i_0}^{w_{i_0}(h_1)} x^\gamma\right) \cdot \left( x_{i_0}^{-w_{i_0}(h_1)} \prod_{i \in S} \prod_{h \in u_i} x_i^{w_i(h)} \right) = \operatorname{wt}(\gamma,U).
\end{equation*}

\smallskip
\noindent\textit{  Proof of $L(\gamma', U') = L(\gamma, U)$ under Case 1:}
Suppose $(\gamma, U)$ satisfies the conditions of Case 1, where $i_0 \in A(\gamma, U)$. We claim that $L(\gamma', U') = L(\gamma, U)$. If $i = i_0$, then $i_0 \in L(\gamma, U)$ by assumption, and because Case 1 maps $i_0$ directly into $S'$, we have $i_0 \in S' \subseteq L(\gamma', U')$. For any vertex $i \neq i_0$, we verify equality via double inclusion:
\begin{itemize}
    \item \textbf{Inclusion $L(\gamma, U) \subseteq L(\gamma', U')$:} Let $i \in L(\gamma, U) \setminus \{i_0\}$. If $i \in S$, then since $S \subseteq S'$, it follows immediately that $i \in S' \subseteq L(\gamma', U')$. If $i \in A(\gamma, U) \setminus \{i_0\}$, then $i \in \operatorname{Init}(\gamma)$ and $\{i\} \cup S \in \mathcal{I}(\mathcal{G})$. Thus we can find a reduced word of $\gamma'$ such that it begins with $g_k$ where $g_k\in G_i$, so $i \in \operatorname{Init}(\gamma')$. Because $i$ and $i_0$ are distinct elements of $\operatorname{Init}(\gamma)$, the vertices $i$ and $i_0$ must mutually commute, meaning $\{i, i_0\} \in \mathcal{I}(\mathcal{G})$.  Since $\{i\} \cup S \in \mathcal{I}(\mathcal{G})$ and $\{i, i_0\} \in \mathcal{I}(\mathcal{G})$, we have $\{i\} \cup S' = \{i\} \cup S \cup \{i_0\} \in \mathcal{I}(\mathcal{G})$. Thus, $i \in A(\gamma', U') \subseteq L(\gamma', U')$.

    \medskip
    \item \textbf{Inclusion $L(\gamma', U') \subseteq L(\gamma, U)$:} Let $i \in L(\gamma', U') \setminus \{i_0\}$. If $i \in S'$, then since $i \neq i_0$, it must be that $i \in S \subseteq L(\gamma, U)$. If $i \in A(\gamma', U') \setminus \{i_0\}$, then $i \in \operatorname{Init}(\gamma')$ and $\{i\} \cup S' \in \mathcal{I}(\mathcal{G})$. Since $i_0 \in S'$, the stability of the subset $\{i\} \cup S'$ implies that $i$ and $i_0$ commute. Hence, the syllable $g_k \in G_i$ can be moved past $g_1$ to the front of $\gamma = g_1 \gamma'$, proving $i \in \operatorname{Init}(\gamma)$. Finally, the containment $S \subseteq S'$ yields $\{i\} \cup S \in \mathcal{I}(\mathcal{G})$, which means $i \in A(\gamma, U) \subseteq L(\gamma, U)$.
\end{itemize}

\smallskip
\noindent\textit{ Proof of $L(\gamma', U') = L(\gamma, U)$  under Case 2:}
Suppose instead that $(\gamma, U)$ satisfies the conditions of Case 2, where $i_0 \in S \setminus A(\gamma, U)$. We claim that $L(\gamma', U') = L(\gamma, U)$. For the minimal index itself, $i_0 \in S \subseteq L(\gamma, U)$ by definition. Under the map, if $r > 1$, then $i_0 \in S' = S \subseteq L(\gamma', U')$. If $r = 1$, then $i_0 \notin S'$, but because the syllable $h_1 \in G_{i_0}$ is shifted to the front of the group element, $i_0 \in \operatorname{Init}(\gamma')$. Since $\{i_0\} \cup S' = S \in \mathcal{I}(\mathcal{G})$, we obtain $i_0 \in A(\gamma', U') \subseteq L(\gamma', U')$. For any vertex $i \neq i_0$, we verify the equality via double inclusion:
\begin{itemize}
    \item \textbf{Inclusion $L(\gamma, U) \subseteq L(\gamma', U')$:} Let $i \in L(\gamma, U) \setminus \{i_0\}$. If $i \in S$, then since $i \neq i_0$ and $S' \in \{S, S \setminus \{i_0\}\}$, it remains true that $i \in S' \subseteq L(\gamma', U')$. If $i \in A(\gamma, U) \setminus \{i_0\}$, then $i \in \operatorname{Init}(\gamma)$ and $\{i\} \cup S \in \mathcal{I}(\mathcal{G})$. Since $i_0 \in S$, the independence condition forces $i$ and $i_0$ to commute, implying the syllable of $i$ commutes with $h_1 \in G_{i_0}$. Thus, $i \in \operatorname{Init}(\gamma' = h_1 \gamma)$. Furthermore, because $S' \subseteq S$, the relation $\{i\} \cup S \in \mathcal{I}(\mathcal{G})$ guarantees $\{i\} \cup S' \in \mathcal{I}(\mathcal{G})$, which proves $i \in A(\gamma', U') \subseteq L(\gamma', U')$.

    \medskip
    \item \textbf{Inclusion $L(\gamma', U') \subseteq L(\gamma, U)$:} Let $i \in L(\gamma', U') \setminus \{i_0\}$. If $i \in S'$, then since $S' \subseteq S$, we immediately have $i \in S \subseteq L(\gamma, U)$. If $i \in A(\gamma', U') \setminus \{i_0\}$, then $i \in \operatorname{Init}(\gamma')$ and $\{i\} \cup S' \in \mathcal{I}(\mathcal{G})$. In Case 2, we know that $i_0$ always belongs to $ \operatorname{init}(\gamma')$. This forces $i$ to commute with $i_0$, therefore we have  $i \in \operatorname{Init}(\gamma = h_1^{-1} \gamma')$. To see that $\{i\} \cup S \in \mathcal{I}(\mathcal{G})$, note that if $r > 1$, then $S = S'$ and the condition is trivial. If $r = 1$, then $S = S' \cup \{i_0\}$; since $\{i\} \cup S' \in \mathcal{I}(\mathcal{G})$ and $i$ commutes with $i_0$, it follows that $\{i\} \cup S \in \mathcal{I}(\mathcal{G})$. Thus, $i \in A(\gamma, U) \subseteq L(\gamma, U)$.
\end{itemize}
Thus, the identity $L(\gamma', U') = L(\gamma, U)$ holds universally.

To complete the argument, it remains to verify that the map $\Phi$ is an involution on $M_\Gamma$. Suppose first that a pair $(\gamma,U) \in M_\Gamma$ satisfies the conditions of Case~1, where $i_0 = \min L(\gamma,U) \in A(\gamma,U)$. As established above, we have  $L(\gamma', U') = L(\gamma, U)$, and hence $\min L(\gamma', U') = i_0$. This immediately implies that $i_0 \notin A(\gamma',U')$. Consequently, the image $(\gamma', U')$ satisfies the precise defining hypotheses of Case~2 with respect to the same minimal active index $i_0$. Applying the rules of Case~2 to $(\gamma', U')$ shifts the first element of the tuple at $i_0$ (which is $g_1$) back to the front of the group element, setting $\gamma'' = g_1\gamma' = \gamma$, and perfectly reverses the structural modification of the tuple configuration. This yields $\Phi(\gamma', U') = (\gamma, U)$.

Conversely, suppose that $(\gamma,U)$ satisfies the conditions of Case~2, so that the minimal index satisfies $i_0 \in S \setminus A(\gamma,U)$. In the resulting pair $(\gamma',U')$, the minimal index is again preserved. Moreover, because the initial element $h_1 \in G_{i_0}$ of the tuple is shifted to the front of the group element, we have $i_0 \in \operatorname{Init}(\gamma')$. Since $\{i_0\} \cup S' = S \in \mathcal{I}(\mathcal{G})$, it follows that $i_0 \in A(\gamma',U')$. Therefore, the pair $(\gamma',U')$ satisfies the hypotheses of Case~1. Applying the rules of Case~1 to $(\gamma',U')$ removes this initial syllable $h_1$ from $\gamma'$ and prepends it back to the tuple configuration at $i_0$, which exactly recovers the original pair; that is, $\Phi(\gamma', U') = (\gamma, U)$.

Consequently, $\Phi$ defines a sign-reversing, weight-preserving involution on the set of all pairs $(\gamma, U) \neq (1, \emptyset)$. It follows that all nontrivial terms cancel out in pairs within the summation. The only surviving contribution arises from the pair $(1, \emptyset)$, whose weight is exactly $1$. This completes the proof.
\end{proof}

\section{Growth series of right angled Artin and Coxeter groups}\label{sect4}
In this section, we record several important consequences of Theorem \ref{mainthm}. First we need a few definitions from basic graph theory. Recall that $\mathcal{G}=(I, E)$ is a simple graph with countably many vertices $I.$
\begin{defn}
\begin{enumerate}
    \item A subset $S$ of $I$ is called stable or independent subset if the subgraph spanned by $S$ has no edges in it. 
    \item We denote $\mathcal I(\mathcal{G})$ by the set of all \textit{finite stable subsets} of $\mathcal{G}$, the empty set is independent by convention.
    \item We denote $\mathcal I_{mult}(\mathcal{G})$ by the set of all finite stable multi-subsets of $\mathcal{G}$, means it is the collection of multi-subsets of $I$ whose underlying sets are stable. We have the natural inclusion $\mathcal I(\mathcal{G})\hookrightarrow{} \mathcal I_{mult}(\mathcal{G}).$
\end{enumerate}
    
\end{defn}

\begin{defn}[Independence series]
  The \textit{independence series} of $\mathcal{G}$ is defined to be 
 $$I(\mathcal G, \bold x) = \sum\limits_{S\in \mathcal{I(G)}}\left(\prod\limits_{i\in S}x_i\right)\in \mathcal{R}.$$
 Note that $I(\mathcal G, \bold x)$ is a polynomial if $\mathcal{G}$ is a finite simple graph. 

 \end{defn}
The following definition of marked independence series of $\mathcal{G}$ can be found here \cite[Section 2.5]{CDV}.
\begin{defn}[Marked independence series]
 The \emph{marked independence series} of \(\mathcal G\) is
\[
I_{mark}(\mathcal G,\mathbf{x})
=
\sum_{U\in\mathcal{I}_{\mathrm{mult}}(\mathcal G)}
\mathbf{x}(U)
\] where $
\mathbf{x}(U)
=
\prod_{i\in I}x_i^{m_i},
$ and \(m_i\) is the multiplicity of the vertex \(i\) in \(U\).
\end{defn}
It is not hard to see that we have the following interpretation of marked independence series:
$$I_{mark}(\mathcal G,\mathbf{x}) = I\left(\mathcal{G}, \frac{\bold x}{1-\bold x}\right),$$
where the right hand side obtained from the independence series $I(\mathcal{G}, \bold x)$ by the following substitution of variables  $x_i\mapsto \frac{x_i}{1-x_i}$ for each $i\in I$.
Indeed the motivation for the definition of marked independence series comes from this substitution of variables. 
The substitution $x_i\mapsto \frac{2x_i}{1-x_i}$ suggests us to define the following double-marked independence series.
\begin{defn}[Double-marked independence series]
     The \emph{double-marked independence series} of \(\mathcal G\) is
\[
I_{mark}^2(\mathcal G,\mathbf{x})
=
\sum_{U\in\mathcal{I}_{\mathrm{mult}}(\mathcal G)} \left(2^{|\sigma(U)|}
\prod_{i\in I}x_i^{m_i}\right) = I\left(\mathcal{G}, \frac{2\bold x}{1-\bold x}\right)
\] where  as before \(m_i\) is the multiplicity of the vertex \(i\) in \(U\) and $|\sigma(U)|$ is the number of elements in the underlying set of $U.$
\end{defn}
Since every element $f\in \mathcal{R}$ with constant term $1$ is invertible in $\mathcal{R}$, its inverse $f^{-1}$  (obtained via the geometric series expansion) also belongs to $\mathcal{R}$. In particular, we have
$$I(\mathcal G,\mathbf{x})^{-1}, I_{mark}(\mathcal G,\mathbf{x})^{-1}, I_{mark}^2(\mathcal G,\mathbf{x})^{-1}\in \mathcal R.$$

\medskip
\begin{example}
Let us consider the following graph $\mathcal{G}$.
\begin{figure}[h]
\centering
\begin{subfigure}{0.49\textwidth}
\centering
    \begin{tikzpicture}
        \node (1) at (1,4) [circle, draw=black!80, thick, label=above:1] {};
    \node (2) at (3,4) [circle, draw=black!80, thick, label=above:2] {};
    \node (3) at (4,5) [circle, draw=black!80, thick, label=right:3] {};
    \node (4) at (4,3) [circle,draw=black!80, thick, label=right:4] {};

    \path[-] (1) edge node[left]{} (2);
    \path[-] (2) edge node[left]{} (3);
    \path[-] (2) edge node[left]{} (4);
    \path[-] (3) edge node[left]{} (4);
\end{tikzpicture}
\end{subfigure}
\end{figure}

Then we have 
\begin{align*}
    I(\mathcal{G}, \bold x)  = & \ 1+x_1+x_2+x_3+x_4+x_1x_3+x_1x_4,\\
    I_{mark}(\mathcal{G},\mathbf{x}) = & 1+\sum_{m=1}^{\infty}x_1^m+\sum_{m=1}^{\infty}x_2^m+\sum_{m=1}^{\infty}x_3^m+\sum_{m=1}^{\infty}x_4^m+\sum_{m, m'=1}^{\infty}x_1^mx_3^{m'}+\sum_{m, m'=1}^{\infty}x_1^mx_4^{m'},\\
    I^2_{mark}(\mathcal{G},\mathbf{x}) = & 1+\sum_{m=1}^{\infty}x_1^{2m}+\sum_{m=1}^{\infty}x_2^{2m}+\sum_{m=1}^{\infty}x_3^{2m}+\sum_{m=1}^{\infty}x_4^{2m}+\sum_{m, m'=1}^{\infty}2^{m+m'}x_1^{m}x_3^{m'}+\sum_{m, m'=1}^{\infty}2^{m+m'}x_1^{m}x_4^{m'}.
\end{align*}
\end{example}

\subsection{}  
Let \(G\) be a group generated by a finite set $S\subseteq G$ such that 
$S=S^{-1}$ and $1\notin S.$ The \emph{length function} with respect to \(S\) is the map
$\ell_S:G\longrightarrow \mathbb{Z}_{\ge 0}$
defined by
\[
\ell_S(g)
=
\min\left\{
k\ge 0
\;\middle|\;
g=s_1s_2\cdots s_k,\;
s_i\in S
\right\}.
\]
It is easy to see that length function $\ell_S$ is a weight function on $G$. We are interested in the corresponding spherical growth series of $G$, namely
$$H_G(x) =  \sum_{g\in G} x^{\ell_S(g)}. $$

When $I$ is finite and each $G_i$ is finitely generated, setting $x_i = t$ for all $i \in I$ in our multivariate growth series formula \eqref{equationthm} recovers the one variable growth series formula due to Chiswell \cite{Chiswell94} Corollary~\ref{chiswellcor}).

\begin{cor}\label{chiswellcor}

    Let $\mathcal{G}=(I,E)$ be a finite simple graph, and let $\mathcal{I}(\mathcal{G})$ denotes the set of all stable subsets of $\mathcal{G}$. The  one variable growth series $H_{\Gamma}(t)$ of the graph product $\Gamma =  \Gamma(\mathcal G, \{G_i\}_{i\in I}) $ is given by
\begin{equation}\label{equationcor}
H_{\Gamma}(t) = \left( \sum_{S \in \mathcal{I}(\mathcal{G})} \prod_{i \in S} \left( \frac{1}{H_i(t)} - 1 \right) \right)^{-1}.
\end{equation}
\end{cor}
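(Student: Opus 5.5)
The plan is to derive Corollary~\ref{chiswellcor} from Theorem~\ref{mainthm} by specializing every variable $x_i$ to a single variable $t$. For each $i\in I$ we take as weight function $w_i$ the word length $\ell_{S_i}$ with respect to a fixed finite symmetric generating set $S_i$ of $G_i$. As noted above, $\ell_{S_i}$ is a weight function. Condition (5) holds because $S_i$ is finite, so there are only finitely many words of each length. Consequently $H_i(x_i)=\sum_{g\in G_i}x_i^{\ell_{S_i}(g)}$ is the usual spherical growth series of $G_i$, and Theorem~\ref{mainthm} gives the multivariate identity \eqref{equationthm}.

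The main step is to show that the substitution $x_i\mapsto t$ turns $H_\Gamma(\mathbf{x})$ into the one-variable growth series of $\Gamma$ with respect to $S=\bigcup_i S_i$. For $\gamma\in\Gamma$ with reduced word $(g_1,\dots,g_r)$, $g_j\in G_{i_j}$, the image of $x^\gamma$ is $t^{\sum_j \ell_{S_{i_j}}(g_j)}$. We must therefore prove
\[
\ell_S(\gamma)=\sum_{j=1}^r \ell_{S_{i_j}}(g_j).
\]
The inequality $\le$ is immediate, since concatenating geodesic words for the $g_j$ gives a word in $S$ for $\gamma$.

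For $\ge$, take a geodesic word $s_1\cdots s_k$ in $S$ representing $\gamma$. Group consecutive letters lying in the same vertex group into syllables; this gives a word in the sense of the Normal Form whose syllables have total $S$-length at most $k$. Now reduce this word using operations (I)--(III). Operations (I) and (III) do not increase the total weight $\sum \ell_{S_{i}}$. Operation (II) does not increase it either, by subadditivity (4) of each $\ell_{S_i}$. Reduction ends at a reduced word, and by the Normal Form proposition any two reduced words for $\gamma$ differ only by shuffling. The multiset of syllables is therefore well defined, and its total weight is at most $k$, which proves the equality. This is the only step with real content, and the obstacle is already handled by the well-definedness of $x^\gamma$ established in Section~\ref{growthdef}.

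Finally, the substitution $x_i\mapsto t$ is a well-defined ring homomorphism $\mathcal R\to\mathbb F[[t]]$ because $I$ is finite, and it preserves inverses of series with constant term $1$. Applying it to both sides of \eqref{equationthm} yields \eqref{equationcor}.
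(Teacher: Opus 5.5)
Your proof is correct and follows the paper's route of specializing $x_i\mapsto t$ in Theorem~\ref{mainthm}. The paper stops at that one-line specialization. You additionally verify, via the normal form together with subadditivity of each $\ell_{S_i}$, that $\ell_S(\gamma)$ equals the total syllable length of a reduced word, and that the substitution is a ring homomorphism because $I$ is finite; the paper leaves both points implicit.
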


The following examples are standard.
\begin{example}
    \begin{enumerate}
\item Let $G=\mathbb{Z}$ and $S=\{1,-1\}$. Then the associated length function is
$\ell_S(n)=|n|,\ \text{for all}\ n\in\mathbb{Z}.$ In this case, the growth series of $\mathbb Z$ is given by $$H_{\mathbb Z}(\bold x) = \sum_{n\in \mathbb Z}x^{|n|}=1+\frac{2x}{1-x}=\frac{1+x}{1-x}.$$

\item Let $G=\mathbb{Z}/n\mathbb{Z}$ and $
S=\{\,1+n\mathbb{Z},\, -1+n\mathbb{Z}\,\}.$
The associated weight function is given by
$\ell_S(k+n\mathbb Z)=\min\{k,n-k\},\ \text{for all}\ 0\le k<n.$ Thus, the growth polynomial is
\[
H_{\mathbb Z/n\mathbb Z}(x)=
\begin{cases}
\displaystyle
1+2\sum_{r=1}^{(n-1)/2}x^r,
& \text{if } n \text{ is odd},\\[2ex]
\displaystyle
1+2\sum_{r=1}^{n/2-1}x^r+x^{n/2},
& \text{if } n \text{ is even}.
\end{cases}
\]

\item Let $G=(W, S)$ be a Coxeter group with finite generating set $S.$ Then $\ell_S$ is the usual length function of the Coxeter group with respect to the generating set $S.$
\end{enumerate}
\end{example}

\subsection{} Recall that the right angled Artin group $RAAG(\mathcal{G})$ associated to $\mathcal{G}$ is defined by the group generated by $s_i, i\in I$ modulo the relations $s_is_j=s_js_i$ whenever $\{i, j\}\notin E.$
\begin{cor}
    Let $RAAG(\mathcal{G})$ be the right angled Artin group associated to $\mathcal{G}$. Then we have 
    $$H_{RAAG(\mathcal{G})}(\bold x) %= \left( \sum_{S \in \mathcal{I}(\mathcal{G})} \prod_{i \in S} \left( \frac{-2x_i}{1+x_i} \right) \right)^{-1}=I\left(\mathcal{G}, \frac{-2\bold x}{1+\bold x}\right)^{-1}
    =I_{mark}^2(\mathcal{G}, -\bold x)^{-1}.$$
\end{cor}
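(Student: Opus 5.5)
The plan is to specialize Theorem \ref{mainthm} to the graph product with every vertex group $G_i=\mathbb Z$, generated by $s_i$, with weight function the word length $w_i(n)=|n|$ with respect to $\{s_i,s_i^{-1}\}$. This is a valid weight function: all five axioms are immediate for $|\cdot|$ on $\mathbb Z$. First, I will check that this graph product, under the paper's convention (generators of $G_i$ and $G_j$ commute when $\{i,j\}\notin E$), is exactly $RAAG(\mathcal G)$ with the presentation recalled above.

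Next, I will check that the multivariate growth series $H_\Gamma(\mathbf x)$ defined in Section \ref{growthdef} is the intended multivariate growth series of $RAAG(\mathcal G)$. For a reduced word, the exponent of $x_i$ in $x^\gamma$ is the total number of letters $s_i^{\pm1}$, and this is well defined by the normal form proposition.

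Then I plug in the standard example $H_i(x_i)=\frac{1+x_i}{1-x_i}$. This gives
\[
\frac{1}{H_i(x_i)}-1=\frac{1-x_i}{1+x_i}-1=\frac{-2x_i}{1+x_i}.
\]
By \eqref{equationthm}, therefore,
\[
H_{RAAG(\mathcal G)}(\mathbf x)=\Bigl(\sum_{S\in\mathcal I(\mathcal G)}\prod_{i\in S}\frac{-2x_i}{1+x_i}\Bigr)^{-1}=I\Bigl(\mathcal G,\frac{-2\mathbf x}{1+\mathbf x}\Bigr)^{-1}.
\]
Finally, I compare this with the double-marked independence series. By its definition, $I^2_{mark}(\mathcal G,\mathbf y)=I(\mathcal G,\frac{2\mathbf y}{1-\mathbf y})$. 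Substituting $y_i=-x_i$ gives $\frac{2y_i}{1-y_i}=\frac{-2x_i}{1+x_i}$, so $I^2_{mark}(\mathcal G,-\mathbf x)=I(\mathcal G,\frac{-2\mathbf x}{1+\mathbf x})$, and the corollary follows.

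The only point needing care is the identity $I^2_{mark}(\mathcal G,\mathbf x)=I(\mathcal G,\frac{2\mathbf x}{1-\mathbf x})$, which the paper asserts in the definition. I will verify it briefly. Expand each factor as $\frac{2x_i}{1-x_i}=\sum_{m\ge1}2x_i^m$. A product over a stable set $S$ then gives exactly one term $2^{|S|}\prod_{i\in S} x_i^{m_i}$ for each stable multiset with underlying set $S$ and multiplicities $m_i\ge 1$.

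All substitutions are legitimate in $\mathcal R$, because each substituted series has zero constant term, and the series being inverted has constant term $1$. When $I$ is infinite, the sum over finite stable sets still converges formally, since each monomial receives only finitely many contributions.
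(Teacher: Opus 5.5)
Your proposal is correct and is essentially the paper's proof: specialize Theorem \ref{mainthm} to $G_i=\mathbb Z$ with $H_i(x_i)=\frac{1+x_i}{1-x_i}$, compute $\frac{1}{H_i(x_i)}-1=\frac{-2x_i}{1+x_i}$, and recognize the resulting series as $I^2_{mark}(\mathcal G,-\mathbf x)^{-1}$. Your extra checks fill in details the paper leaves implicit: the weight-function axioms for $|n|$, the identity $I^2_{mark}(\mathcal G,\mathbf x)=I(\mathcal G,\frac{2\mathbf x}{1-\mathbf x})$, and formal convergence when $I$ is infinite.
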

\begin{proof}
If we take $G_i=\mathbb Z$ for $i\in I$, then we have $\Gamma\cong RAAG(\mathcal{G})$. Since $H_i(x_i)=\frac{1+x_i}{1-x_i}$, we get $$\frac{1}{H_i(x_i)}-1=\frac{-2x_i}{1+x_i}.$$ Substituting this in the Equation \ref{equationthm}, we get the desired result. 
\end{proof}

\subsection{} Recall that the right angled Coxeter group $RACG(\mathcal{G})$ associated to $\mathcal{G}$ is defined by the group generated by $s_i, i\in I$ modulo the relations 
$$\text{
$s_i^2=1$ for all $i\in I$ and
$s_is_j=s_js_i$ whenever $\{i, j\}\notin E.$} $$
The following result was first proved in \cite[Theorem 4]{CDV}. %using the connection between the universal enveloping algebra of the partially commutative Lie superalgebra whose generater correspond to isotropic odd imaginary simple roots and the group algebra of the right angled Coxeter group. 
\begin{cor}
    Let $RACG(\mathcal{G})$ be the right angled Coxeter group associated to $\mathcal{G}$. Then we have 
    $$H_{RACG(\mathcal{G})}(\bold x) %= \left( \sum_{S \in \mathcal{I}(\mathcal{G})} \prod_{i \in S} \left( \frac{-x_i}{1+x_i} \right) \right)^{-1}=I\left(\mathcal{G}, \frac{-\bold x}{1+\bold x}\right)^{-1}
    =I_{mark}(\mathcal{G}, -\bold x)^{-1}.$$
\end{cor}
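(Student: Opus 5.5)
This follows from Theorem \ref{mainthm} by specializing every vertex group to $G_i=\mathbb Z/2\mathbb Z=\{1,s_i\}$, in the same way as the preceding corollary for right-angled Artin groups. First, with these vertex groups the graph product $\Gamma(\mathcal G,\{\mathbb Z/2\mathbb Z\}_{i\in I})$ is exactly $RACG(\mathcal G)$ under our convention, since both impose $s_i^2=1$ and $s_is_j=s_js_i$ for $\{i,j\}\notin E$. We equip each $G_i$ with the word-length weight $w_i(1)=0$, $w_i(s_i)=1$. This clearly satisfies conditions (1)--(5), so $H_i(x_i)=1+x_i$.

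Next, we compute the substituted factor:
\[
\frac{1}{H_i(x_i)}-1=\frac{1}{1+x_i}-1=\frac{-x_i}{1+x_i}.
\]
Plugging this into \eqref{equationthm} gives
\[
H_{RACG(\mathcal G)}(\mathbf x)=I\!\left(\mathcal G,\frac{-\mathbf x}{1+\mathbf x}\right)^{-1}.
\]

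Finally, we identify the right-hand side with the marked independence series. We use the stated identity $I_{mark}(\mathcal G,\mathbf x)=I\!\left(\mathcal G,\frac{\mathbf x}{1-\mathbf x}\right)$, which holds because each vertex $i$ of the underlying stable set $\sigma(U)$ contributes $\sum_{m\ge1}x_i^m=\frac{x_i}{1-x_i}$. Replacing $\mathbf x$ by $-\mathbf x$ turns $\frac{x_i}{1-x_i}$ into $\frac{-x_i}{1+x_i}$, so
\[
I_{mark}(\mathcal G,-\mathbf x)=I\!\left(\mathcal G,\frac{-\mathbf x}{1+\mathbf x}\right),
\]
and the claim follows.

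None of these steps is a real obstacle. The only point needing care is that the substitution $\mathbf x\mapsto-\mathbf x$ is legitimate in $\mathcal R$ when $I$ is infinite, since each coefficient involves only finitely many stable multisets. It is also worth checking that the word-length weight on $\mathbb Z/2\mathbb Z$ makes the multivariate series of Section \ref{growthdef} record, in each variable $x_i$, the number of occurrences of $s_i$ in a reduced word.
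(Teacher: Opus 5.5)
Your proposal is correct and follows essentially the same route as the paper: specialize Theorem~\ref{mainthm} to $G_i=\mathbb Z/2\mathbb Z$ with $H_i(x_i)=1+x_i$, compute $\frac{1}{H_i(x_i)}-1=\frac{-x_i}{1+x_i}$, and substitute. Your version also spells out the identification $I_{mark}(\mathcal G,-\mathbf x)=I\bigl(\mathcal G,\frac{-\mathbf x}{1+\mathbf x}\bigr)$, obtained by grouping stable multisets by their underlying stable set, which the paper leaves implicit.
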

\begin{proof}
If we take $G_i=\mathbb Z/2\mathbb Z$ for $i\in I$, then we have $\Gamma\cong RACG(\mathcal G)$. Since $H_i(x_i)=1+x_i$, we get $$\frac{1}{H_i(x_i)}-1=\frac{-x_i}{1+x_i}.$$ Substituting this in the Equation \ref{equationthm}, we get the desired result. 
\end{proof} 

\begin{rem}
Note that Theorem~\ref{mainthm} is quite general and admits explicit specializations for several important classes of graph products of groups. In particular, analogous formulas can be derived when each $G_i$ is a cyclic group, or more generally, a finite Coxeter group. These cases will be investigated in a separate work.

In the present article, as indicated in the introduction, we restrict our attention to right-angled Artin groups and right-angled Coxeter groups. Our main objective is to express the coefficients of their growth series in terms of some generalizations chromatic polynomials of the underlying simple graph $\mathcal{G}$.
\end{rem}

%%%%%%%%%%%%%

\section{Marked and double-marked multi-colorings of a simple graph}\label{sect5}
As before, take $\mathcal{G}$ be a simple graph with a countable vertex set $I$ and edge set $E.$
We now recall the definition of marked multi-colorings of  $\mathcal{G}$ from \cite[Section 3.1]{CDV}. Throughout this subsection let $\bold{m}:=(m_i:i\in I)$ be a tuple of non-negative integers with finite support.
\begin{defn} Given $q\in\mathbb{N}$ we define the following.
\begin{enumerate}
        \item We call a map $_{\bold{m}}\Gamma^{\mathrm{mark}}_\mathcal{G}:I\rightarrow P^{\mathrm{mult}}(\{1,\dots,q\})$ a marked multi-coloring of $\mathcal{G}$ associated to $\bold{m}$ using at most $q$-colors if the following conditions are satisfied:\vspace{0,1cm}
        
        \begin{enumerate}[(i)]
            \item for all $i\in I$ we have $|_{\bold{m}}\Gamma^{\mathrm{mark}}_\mathcal{G}(i)|=m_i$,\vspace{0,1cm}
            \item for all $i,j\in I$ such that $(i,j)\in E$, we have $_{\bold{m}}\Gamma^{\mathrm{mark}}_\mathcal{G}(i)\cap {_{\bold{m}}\Gamma^{\mathrm{mark}}_\mathcal{G}(j)}=\emptyset$.
            
        \end{enumerate}
       % \item We say that two given $\prescript{}{1}{}\Bar{\tau}^{\bold{m}}_\mathcal{G}$ and $\prescript{}{2}{}\Bar{\tau}^{\bold{m}}_\mathcal{G}$ proper vertex super multi-coloring  of $\mathcal{G} $ associated to $\bold{m}$ using at most $q$-colors distinct if $\prescript{}{1}{}\Bar{\tau}^{\bold{m}}_\mathcal{G}(i)\neq \prescript{}{2}{}\Bar{\tau}^{\bold{m}}_\mathcal{G}(i)$ for some $i\in I$.
       % {\color{blue} Can we erase this part? I dont see the point.}
       \vspace{0,1cm}
       
        \item The number of marked multi-colorings of $\mathcal{G}$ associated to $\bold{m}$  using at most $q$-colors %is a polynomial in $q$, called the \textit{super-generalized chromatic polynomial} of $\mathcal{G}$ associated to $\bold{m}$, and it 
        is denoted by $_{\bold{m}}\Pi^{\mathrm{mark}}_\mathcal{G}(q)$.
    \end{enumerate}
\end{defn}
We recall from  \cite[Proposition 3.1]{CDV} that the function $_{\bold{m}}\Pi^{\mathrm{mark}}_\mathcal{G}(q)$ is indeed a polynomial in $q$, called  as the marked chromatic polynomial of $\mathcal{G}$ associated to $\mathbf m$. To state precisely, the expression of $_{\bold{m}}\Pi^{\mathrm{mark}}_\mathcal{G}(q)$ we need the following notion.

\begin{defn}
    We denote by $P_k(\bold{m},\mathcal{G})$ the set of all ordered $k$-tuples $(P_1,\dots,P_k)$ satisfying:
\begin{enumerate}
\item $P_r$ is a non-empty stable multi-subset of $I$ for all $1\leq k\leq r$ \medskip
    \item the disjoint union (as multi-sets) 
    $$P_1\sqcup \dots\sqcup P_k = \{i^{m_i} : i\in\mathrm{supp}(\bold{m})\},$$ i.e., $i$ appears exactly $m_i$ times for each $i\in \mathrm{supp}(\bold{m})$.
\end{enumerate} \end{defn} Since $\mathrm{supp}(\bold{m})$ is finite, $P_k(\bold{m},\mathcal{G})$ is also finite.
We have the following expression of the marked chromatic polynomial of $\mathcal{G}$ (see \cite[Proposition 3.1]{CDV}).
\begin{prop}\label{props6}
For each $q\in \mathbb{N}$ and $\mathbf{m}\in\mathbb{Z}_{\ge 0}^I$ with non-empty finite support, we have
    \begin{equation}
    \label{equation1}
    _{\bold{m}}\Pi^{\mathrm{mark}}_\mathcal{G}(q)=\sum_{k\geq 1}| P_k(\mathbf{m},\mathcal{G})| \binom{q}{k}.
\end{equation} \end{prop}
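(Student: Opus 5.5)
The plan is to sort marked multi-colorings by the set of colors they actually use, and then count, once for each color set, the colorings that use exactly those colors.

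Fix $q\in\mathbb N$ and let $\Gamma={}_{\mathbf m}\Gamma^{\mathrm{mark}}_{\mathcal G}$ be a marked multi-coloring associated to $\mathbf m$ with colors from $\{1,\dots,q\}$. For each color $c\in\{1,\dots,q\}$, form the multi-subset $Q_c$ of $I$ in which vertex $i$ appears with multiplicity equal to the multiplicity of $c$ in $\Gamma(i)$. Condition (ii) says that no edge $\{i,j\}$ has both $\Gamma(i)$ and $\Gamma(j)$ containing $c$, so the underlying set $\sigma(Q_c)$ is stable. Condition (i) says that $\bigsqcup_c Q_c=\{i^{m_i}: i\in\supp(\mathbf m)\}$.

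Conversely, any $q$-tuple $(Q_1,\dots,Q_q)$ of stable multi-subsets, possibly empty, whose disjoint union is $\{i^{m_i}\}$ determines a unique marked multi-coloring. One sets $\Gamma(i)$ to be the multiset in which color $c$ occurs with the multiplicity of $i$ in $Q_c$. Checking that these two constructions are mutually inverse is routine. So ${}_{\mathbf m}\Pi^{\mathrm{mark}}_{\mathcal G}(q)$ equals the number of such $q$-tuples.

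Next, group these $q$-tuples by their set of nonempty positions $C=\{c: Q_c\neq\emptyset\}$. Since $\supp(\mathbf m)\neq\emptyset$, we have $|C|=k\ge 1$. List $C=\{c_1<\dots<c_k\}$. The tuples with nonempty positions exactly $C$ correspond bijectively to ordered $k$-tuples $(Q_{c_1},\dots,Q_{c_k})$ of nonempty stable multi-subsets with the prescribed disjoint union, which is exactly the set $P_k(\mathbf m,\mathcal G)$. There are $\binom{q}{k}$ choices of $C$, so summing over $k$ gives \eqref{equation1}.

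The sum is finite because $P_k(\mathbf m,\mathcal G)=\emptyset$ for $k>|\mathbf m|$, and the terms with $k>q$ vanish since $\binom qk=0$. This also shows that the count is a polynomial in $q$.

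The only delicate point is the stability requirement. It must be imposed on the underlying set $\sigma(Q_c)$, and it is the correct translation of condition (ii) precisely because repeated colors at a single vertex are allowed. Once this is matched with the definition of "stable multi-subset" in $P_k(\mathbf m,\mathcal G)$, the rest is bookkeeping.
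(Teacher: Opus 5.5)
Your proof is correct and takes essentially the paper's route. The paper cites this proposition from earlier work rather than proving it, but its proof of the double-marked analogue, Proposition~\ref{prop2}, is the same argument with signs added: sort colorings by the $k$-set of colors used, biject those using exactly that set with $P_k(\mathbf m,\mathcal G)$, and sum over the $\binom{q}{k}$ choices of set. Your grouping by the \emph{exact} set of nonempty color classes is slightly more careful than the paper's wording ``exclusively use these $c_i$'', since it is what makes every entry of the tuple nonempty, as $P_k(\mathbf m,\mathcal G)$ requires.
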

The following beautiful formula proved in \cite[Theorem 1]{CDV}.
\begin{thm}\label{expmarkedindp}
    Let $\mathcal{G}$ be a simple graph with countable vertex set $I$. For $q\in \mathbb{Z}$, we have as formal series
    $$I_{mark}(\mathcal{G},\mathbf{x})^q=\sum_{\mathbf{m}\in \mathbb{Z}_{\ge 0}^{I}} {_{\bold{m}}\Pi^{\mathrm{mark}}_\mathcal{G}(q)}\ \bold x^\mathbf{m}.$$
\end{thm}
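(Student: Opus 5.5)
The plan is to establish the identity first for positive integers $q$, where both sides have a direct combinatorial meaning, and then extend it to all $q\in\mathbb{Z}$ by polynomiality in $q$.

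\emph{Step 1: positive $q$.} For $q\ge 1$, expand $I_{mark}(\mathcal G,\mathbf x)^q$ as a product of $q$ copies of $\sum_{U\in\mathcal I_{mult}(\mathcal G)}\mathbf x(U)$. The coefficient of $\mathbf x^{\mathbf m}$ is then the number of ordered $q$-tuples $(U_1,\dots,U_q)$ of stable multi-subsets, some possibly empty, whose multiset union is $\{i^{m_i}\}$. Such a tuple corresponds bijectively to a marked multi-coloring. To vertex $i$ assign the multiset of colors $c\in\{1,\dots,q\}$, with color $c$ repeated as often as $i$ occurs in $U_c$. Condition (i) holds because the multiplicities add up to $m_i$. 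Condition (ii) says adjacent vertices never share a color, which is exactly the statement that each $U_c$ has a stable underlying set. Hence the coefficient equals ${}_{\mathbf m}\Pi^{\mathrm{mark}}_{\mathcal G}(q)$.

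Alternatively, one can discard the empty $U_c$'s. What remains is an element of $P_k(\mathbf m,\mathcal G)$ together with a choice of $k$ positions out of $q$, which gives $\sum_k|P_k|\binom qk$ and matches Proposition~\ref{props6} directly. For $\mathbf m=0$ both sides equal $1$.

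\emph{Step 2: all integers $q$.} Write $I_{mark}=1+J$, where $J$ has zero constant term. For every $q\in\mathbb Z$ the binomial series gives
\[
(1+J)^q=\sum_{k\ge0}\binom qk J^k .
\]
This is valid as formal power series because $J^k$ only involves monomials of total degree at least $k$, so for each $\mathbf m$ only $k\le|\mathbf m|$ contribute. For negative $q$, the formal inverse agrees with this expansion, since $\binom qk$ is a polynomial in $q$ and the product identity $(1+J)^a(1+J)^b=(1+J)^{a+b}$ holds polynomially in $a,b$.

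Now take the coefficient of $\mathbf x^{\mathbf m}$:
\[
[\mathbf x^{\mathbf m}]J^k=|P_k(\mathbf m,\mathcal G)|,
\]
since $J^k$ enumerates ordered $k$-tuples of nonempty stable multisets. Therefore, for every $q\in\mathbb Z$, the coefficient of $\mathbf x^{\mathbf m}$ in $I_{mark}^q$ is $\sum_k|P_k(\mathbf m,\mathcal G)|\binom qk$. This is precisely the polynomial in equation~\eqref{equation1}, evaluated at $q$.

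\emph{Main obstacle.} The delicate point is how to interpret ${}_{\mathbf m}\Pi^{\mathrm{mark}}_{\mathcal G}(q)$ for $q\le 0$. It must be read as the polynomial of Proposition~\ref{props6}, not as a count of colorings. With that reading, Step 2 is purely formal, and the only thing left to check is the convergence of the binomial expansion for countable $I$. That convergence is handled by the degree filtration above: each coefficient involves only finitely many vertices, namely those in $\supp(\mathbf m)$.
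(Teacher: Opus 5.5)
Your proposal is correct. Its core, Step 2, is essentially the paper's route: the paper only quotes this statement from earlier work, but its proof of the double-marked analogue is the same argument — expand $f^q=\sum_{k}\binom{q}{k}(f-1)^k$, read off $[\mathbf x^{\mathbf m}](f-1)^k$ as a count over $P_k(\mathbf m,\mathcal G)$, and match with the $\binom{q}{k}$-expansion of Proposition~\ref{props6}. Your Step 1 bijection is logically redundant given Proposition~\ref{props6}, though combined with Step 2 it re-derives that proposition, and your polynomial-identity justification of the binomial series for negative $q$ is more careful than the paper's bare convention.
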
 
As an immediate corollary, the authors of \cite{CDV} obtained the following explicit formula for the grown series of $RACG(\mathcal{G})$  (see \cite[Theorem 4]{CDV}):
\begin{cor}\label{corracg}   Let $\mathcal{G}$ be a simple graph with countable vertex set $I$. For $q\in \mathbb{Z}$, we have as formal series
 $$H_{RACG(\mathcal{G})}(\bold x) = \sum_{\mathbf{m}\in \mathbb{Z}_{\ge 0}^{I}} (-1)^{|\bold m|} {_{\bold{m}}\Pi^{\mathrm{mark}}_\mathcal{G}(-1)}\ \bold x^\mathbf{m}.$$
\end{cor}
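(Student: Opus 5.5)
The plan is to combine the right-angled Coxeter specialization of Theorem~\ref{mainthm} with Theorem~\ref{expmarkedindp} at the exponent $q=-1$, and then keep track of the substitution $\mathbf x\mapsto -\mathbf x$.

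First, I invoke the corollary of Section~\ref{sect4} for right-angled Coxeter groups. It gives the identity of formal series
\[
H_{RACG(\mathcal{G})}(\mathbf x)=I_{mark}(\mathcal{G},-\mathbf x)^{-1}.
\]
For this to make sense one must check that the substitution $x_i\mapsto -x_i$ is well defined on $\mathcal R$, and that the inverse exists. The substitution is a continuous ring automorphism of $\mathcal R$: it multiplies the coefficient of $\mathbf x^{\mathbf m}$ by $(-1)^{|\mathbf m|}$. The inverse exists because the constant term of $I_{mark}(\mathcal{G},-\mathbf x)$ is $1$.

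Second, I apply Theorem~\ref{expmarkedindp} with $q=-1$. This gives
\[
I_{mark}(\mathcal{G},\mathbf x)^{-1}=\sum_{\mathbf m}{}_{\mathbf m}\Pi^{\mathrm{mark}}_{\mathcal G}(-1)\,\mathbf x^{\mathbf m}.
\]
Here ${}_{\mathbf m}\Pi^{\mathrm{mark}}_{\mathcal G}(-1)$ means the value at $-1$ of the polynomial from Proposition~\ref{props6}, namely $\sum_k |P_k(\mathbf m,\mathcal G)|\binom{-1}{k}=\sum_k(-1)^k|P_k(\mathbf m,\mathcal G)|$. For $\mathbf m=0$ this coefficient is taken to be $1$, the constant term.

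Third, I apply the automorphism $x_i\mapsto -x_i$ to both sides. Since it is a ring automorphism, it commutes with taking inverses, so $I_{mark}(\mathcal G,-\mathbf x)^{-1}$ is the image of $I_{mark}(\mathcal G,\mathbf x)^{-1}$. Each monomial $\mathbf x^{\mathbf m}$ picks up the factor $(-1)^{|\mathbf m|}$. Combining this with the first step yields the stated expansion of $H_{RACG(\mathcal{G})}(\mathbf x)$.

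The only delicate point is the second step. One must be sure that Theorem~\ref{expmarkedindp} is valid for negative $q$ when the coefficient is read as the polynomial evaluation. It is: the theorem is stated for all $q\in\mathbb Z$, and Proposition~\ref{props6} supplies the polynomial. Alternatively, both sides of the $q$-power identity have coefficients that are polynomials in $q$, because $f^q=\sum_k\binom{q}{k}(f-1)^k$ with only finitely many terms contributing to each monomial. Agreement for all $q\in\mathbb N$ therefore forces agreement at $q=-1$.
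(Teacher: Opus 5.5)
Your proposal is correct and follows essentially the paper's route. The paper presents this statement as an immediate consequence of $H_{RACG(\mathcal G)}(\mathbf x)=I_{mark}(\mathcal G,-\mathbf x)^{-1}$ combined with Theorem~\ref{expmarkedindp} at $q=-1$, with the sign $(-1)^{|\mathbf m|}$ arising from the substitution $\mathbf x\mapsto -\mathbf x$ exactly as you track it. Your extra checks (the automorphism $x_i\mapsto -x_i$, invertibility, and polynomiality in $q$) are sound but not strictly needed, since the theorem is already stated for all $q\in\mathbb Z$.
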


In particular, one is able to compute the growth series of $RACG(\mathcal{G})$ very explicitly for finite chordal graphs or more generally for $PEO$ graphs (see \cite[Corollary 6.3]{CDV}). %We will recall these formulas from \cite[Corollary 6.3]{CDV} for completeness. 
Our aim is to write down similar explicit formulas for the coefficients of $H_{RAAG(\mathcal{G})}(\bold x)$ for $PEO$-graphs. To achieve this, we need to know how to interpret 
 the coefficients of $H_{RAAG(\mathcal{G})}(\bold x)$ in terms of some generalized colorings of $\mathcal G.$ We will do this in upcoming sections.

\subsection{} In this section, we want to generalize the notion of marked colorings of $\mathcal{G}$ to double-marked colorings of $\mathcal{G}$ and obtain a formula  similar to the above one that is in Corollary \ref{corracg} for the right angled Artin groups. First we set the following notation: denote  $$\text{
$\mathcal{M}^2$ by the set of all pairs $(M, \epsilon)$ where  $M\in \mathcal{P}_{\text{mult}}(\{1, 2, \ldots, q\})$ and
$\epsilon: \sigma(M)\to \{+, -\}$.
}$$
\begin{defn}
Let $\mathcal{G} = (I, E)$ be a simple graph with countable vertex set $I$, and let $\mathbf{m} = (m_i:i\in  I)$ be a tuple of non-negative integers with finite support. For an integer $q \in \mathbb{N}$, a \emph{double-marked multi-coloring} of $\mathcal{G}$ associated with $\mathbf{m}$ using at most $q$ colors is an assignment 
$$_{\bold{m}}\Gamma^{2-\mathrm{mark}}_\mathcal{G}:I\rightarrow \mathcal{M}^2 $$
such that for each vertex $i \in I$,  we have $i\mapsto (M_i, \epsilon_i)$ where
\begin{enumerate}
    \item[(a)]      $
    M_i \in \mathcal{P}_{\text{mult}}(\{1, 2, \ldots, q\})$
    is a multi-set of colors, and 
    \item[(b)] $\epsilon_i: \sigma(M_i)\to  \{+, -\}$ (for each color $a \in \sigma(M_i)$ we assign a sign 
    $
    \epsilon_i(a) \in \{+, -\}$)
\end{enumerate}
subject to the following conditions:
\begin{enumerate}
    \item $|M_i| = m_i$ for all $i \in I$;
    \item if $\{i, j\} \in E$, then
    \[
    M_i \cap M_j = \emptyset.
    \]
\end{enumerate}
We denote the total number of distinct such double-marked multi-colorings of $\mathcal{G}$ associated with $\mathbf{m}$ using at most $q$ colors by 
$_{\mathbf{m}}\Pi_\mathcal{G}^{2}( q).$ We will soon prove that indeed this function is a polynomial in $q.$
\end{defn}

\begin{rem}
The sign is attached to each pair \((i,a)\in I\times [q]\) such that the color \(a\) occurs at the vertex \(i\).
Thus if \(a\) occurs with multiplicity \(r\ge1\) in \(M_i\), there are still only two choices for
the signs at \((i,a)\). This is precisely the convention that leads to the
substitution $
-\frac{2x_i}{1+x_i}.$
\end{rem}
\iffalse
We need the following definitions to proceed.
\begin{defn}
For \((P_1,\ldots,P_k)\in P_k(\mathbf m,\mathcal{G})\), define its doble-marked weight by
\[
\wt_2(P_1,\ldots,P_k)
=
\prod_{r=1}^k 2^{|\supp(P_r)|}.
\]
Define
\[
W_k^{(2)}(\mathbf m,\mathcal{G})
=
\sum_{(P_1,\ldots,P_k)\in P_k(\mathbf m,\mathcal{G})}
\wt_2(P_1,\ldots,P_k).
\]
That is,
\[
W_k^{(2)}(\mathbf m,G)
=
\sum_{(P_1,\ldots,P_k)\in P_k(\mathbf m,\mathcal{G})}
\prod_{r=1}^k 2^{|\supp(P_r)|}.
\]
\end{defn}\fi

\begin{defn}
Let $P$ be a multi-set of vertices such that its underlying set $\sigma(P)$ is an independent subset of $\mathcal{G}$. A \emph{marked independent multi-set} is a pair $\widetilde{P} = (P, \mu)$, where 
\[
\mu: \sigma(P) \longrightarrow \{+, -\}
\]
is a marking function assigning a  sign to each distinct vertex in the underlying set $\sigma(P)$.

Given a tuple of non-negative integers $\bold m$ with finite support, we denote by $\widetilde{P}_k(\bold m, \mathcal{G})$ the set of all ordered $k$-tuples $(\widetilde{P}_1, \dots, \widetilde{P}_k) 
$ of marked independent multi-sets $\widetilde{P}_r = (P_r, \mu_r)$ such that the underlying sequence of multi-sets $(P_1, \dots, P_k)$ constitutes an element of $P_k(\bold m, \mathcal{G})$.
\end{defn}

\begin{rem}
    For each $(P_1, \dots, P_k) \in P_k(\bold m, \mathcal{G})$, there are exactly $\prod_{r=1}^k 2^{|\sigma(P_r)|}$ corresponding ordered $k$-tuples $(\widetilde{P}_1, \dots, \widetilde{P}_k) \in \widetilde{P}_k(\bold m, \mathcal{G})$. This is because each vertex in the underlying set  of each multi-set $P_r$ can independently receive one of two signs $+$ or $-$.
\end{rem}
We have the following explicit expression for $_{\mathbf{m}}\Pi_\mathcal{G}^{2}( q).$
\begin{prop}\label{prop2}
Let \(\mathcal{G}=(I,E)\) be a simple graph with countably many vertices $I$ and let
\(\mathbf m\in \mathbb{Z}_{\ge 0}^{I}\) have non-empty finite support. For every \(q\in\mathbb{N}\), we have
$$
_\mathbf{m}\Pi_{\mathcal G}^2(q)
=
\sum_{k\ge1}
\left(
\sum_{(P_1,\ldots,P_k)\in P_k(\mathbf m,G)}
\prod_{r=1}^k 2^{|\sigma(P_r)|}
\right)
\binom qk.
$$
\end{prop}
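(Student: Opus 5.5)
The plan is to mimic the proof of Proposition~\ref{props6}: build a bijection between double-marked multi-colorings and pairs consisting of a set of $k$ colors and an element of $\widetilde{P}_k(\mathbf m,\mathcal G)$. The identity will then follow by counting.

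First, fix a double-marked multi-coloring $i\mapsto (M_i,\epsilon_i)$ associated with $\mathbf m$ using at most $q$ colors. Let $C\subseteq\{1,\dots,q\}$ be the set of colors that actually occur, i.e.\ $C=\bigcup_{i}\sigma(M_i)$. Write $C=\{c_1<\dots<c_k\}$; since $\mathbf m$ has non-empty support, $k\ge 1$.

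For each $r$, define the multi-set $P_r$ on $I$ in which $i$ appears with multiplicity equal to the multiplicity of $c_r$ in $M_i$. Attach to it the marking $\mu_r(i)=\epsilon_i(c_r)$ for $i\in\sigma(P_r)$. Then:
\begin{itemize}
\item $P_r$ is non-empty, because $c_r$ occurs somewhere.
\item $\sigma(P_r)$ is stable, by condition (2) of the definition: if $\{i,j\}\in E$, then $c_r$ cannot lie in both $M_i$ and $M_j$.
\item $P_1\sqcup\cdots\sqcup P_k$ contains $i$ exactly $\sum_r (\text{mult of } c_r \text{ in } M_i)=|M_i|=m_i$ times, by condition (1).
\end{itemize}
Hence $((P_1,\mu_1),\dots,(P_k,\mu_k))\in\widetilde{P}_k(\mathbf m,\mathcal G)$.

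Conversely, given a $k$-subset $C=\{c_1<\dots<c_k\}$ of $\{1,\dots,q\}$ and an element of $\widetilde{P}_k(\mathbf m,\mathcal G)$, set $M_i$ to be the multi-set in which $c_r$ appears with multiplicity equal to the multiplicity of $i$ in $P_r$, and set $\epsilon_i(c_r)=\mu_r(i)$. This is a double-marked multi-coloring, and its set of used colors is exactly $C$ because every $P_r$ is non-empty. The two constructions are mutually inverse.

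The main point to check is that the signs match. A sign in the coloring is attached to each pair $(i,a)$ with $a\in\sigma(M_i)$, and a sign in the marked tuple is attached to each pair $(r,i)$ with $i\in\sigma(P_r)$. Under the bijection, $a=c_r$ lies in $\sigma(M_i)$ exactly when $i\in\sigma(P_r)$, so the two index sets correspond. Once this is noted, everything else is bookkeeping.

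Counting now gives
\[
{}_{\mathbf m}\Pi^2_{\mathcal G}(q)=\sum_{k\ge1}\binom qk\,|\widetilde{P}_k(\mathbf m,\mathcal G)|.
\]
By the preceding remark, $|\widetilde{P}_k(\mathbf m,\mathcal G)|=\sum_{(P_1,\dots,P_k)\in P_k(\mathbf m,\mathcal G)}\prod_{r=1}^k 2^{|\sigma(P_r)|}$, which yields the stated formula. The sum over $k$ is finite, since $k\le|\mathbf m|$, so ${}_{\mathbf m}\Pi^2_{\mathcal G}(q)$ is a polynomial in $q$.
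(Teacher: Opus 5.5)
Your proposal is correct and is essentially the paper's proof. The paper also fixes a $k$-subset of colors and bijects the colorings using exactly those colors with $\widetilde{P}_k(\mathbf m,\mathcal G)$, via the multiplicities of $c_r$ and $\mu_r(i)=\epsilon_i(c_r)$, then counts the $\prod_r 2^{|\sigma(P_r)|}$ markings. You check explicitly that non-emptiness of each $P_r$ makes the used color set exactly $C$, which the paper leaves implicit in the phrase ``exclusively use these colors''.
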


\begin{proof}

First, fix a subset of colors $\{c_1 < \dots < c_k\} \subseteq \{1, \dots, q\}$ and consider the double-marked multi-colorings $_{\bold m}\Gamma^{2-\mathrm{mark}}_\mathcal{G}$ that exclusively use these $c_i$ to color the vertices of $\mathcal{G}$. We claim that these specific colorings are in natural bijection with $\widetilde{P}_k(\mathbf{m}, \mathcal{G})$. By the Remark above, the cardinality of the set $\widetilde{P}_k(\mathbf{m}, \mathcal{G})$ is precisely $\sum_{(P_1,\ldots,P_k)\in P_k(\mathbf m,G)}\prod_{r=1}^k 2^{|\sigma(P_r)|}$. Summing over all choices of $k$ colors yields $\binom{q}{k}$ combinations, completing the proof once the bijection is established.

\medskip
Let $_{\bold{m}}\Gamma^{2-\mathrm{mark}}_\mathcal{G}$ be a double-marked multi-coloring of $\mathcal{G}$ using only the given colors. Now we can associate a natural element $(\widetilde{P}_1, \dots, \widetilde{P}_k)\in \widetilde{P}_k(\mathbf{m},\mathcal{G})$ as follows. Denoting by $a_{i,r}$ the multiplicity of color $c_r$ in $M_i$ where $_{\bold{m}}\Gamma^{2-\mathrm{mark}}_\mathcal{G}(i) = (M_i, \epsilon_i)$, we set 
$$P_r=\{i^{a_{i,r}}: i\in I\},\ \ 1\le r\le k.$$
It follows from the definition of $_{\bold{m}}\Gamma^{2-\mathrm{mark}}_\mathcal{G}$ (especially from the constraint $(2)$ on $M_i$'s) that $P_r\in \mathcal{I}({\mathcal{G}})$ is a non-empty independent multi-set and these multi-sets $P_r,\ 1\le r\le k,$ form a partition of the multi-set $\{i^{m_i}: i\in \mathrm{supp}(\mathbf{m})\}$. Thus $(P_1,\dots,P_k)\in {P}_k(\mathbf{m},\mathcal{G})$. Now for each $P_r$ we associate the marking function $\mu_r:\sigma (P_r)\rightarrow \{+,-\}$ as follows 
$$\mu_r(i)=\epsilon_i(c_r)$$
Pairing each multi-set with its marking function yields $\widetilde{P}_r = (P_r, \mu_r)$, which confirms that $(\widetilde{P}_1, \dots, \widetilde{P}_k) \in \widetilde{P}_k(\mathbf{m}, \mathcal{G})$.

Conversely, given any $(\widetilde{P}_1, \dots, \widetilde{P}_k)\in \widetilde{P}_k(\mathbf{m},\mathcal{G})$ we
assign the color $c_r$ to the vertices in $P_r$ counted with multiplicity, for each $1\le r\le k$ and for each pair $(i,c_r)\in I\times [q]$   we assign the sign $\epsilon_i(c_r)=\mu_r(i)$. This gives a double-marked multi-coloring of $\mathcal{G}$ and the  correspondence described  here is bijective.

\end{proof}

\begin{cor}
The function $_{\mathbf{m}}\Pi_\mathcal{G}^{2}( q)$ is a polynomial in \(q\). Moreover, the
coefficient of \(q\) is
$$
_{\mathbf{m}}\Pi_\mathcal{G}^{2}( q)[q]
=
\sum_{k\ge1}\frac{(-1)^{k-1}}{k}\left(
\sum_{(P_1,\ldots,P_k)\in P_k(\mathbf m,G)}
\prod_{r=1}^k 2^{|\sigma(P_r)|}.\right)
$$
\end{cor}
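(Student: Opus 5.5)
The plan is to read everything off Proposition~\ref{prop2}. Set
\[
W_k:=\sum_{(P_1,\ldots,P_k)\in P_k(\mathbf m,\mathcal G)}\prod_{r=1}^k 2^{|\sigma(P_r)|},
\]
so that $_{\mathbf m}\Pi^2_{\mathcal G}(q)=\sum_{k\ge1}W_k\binom qk$ for every $q\in\mathbb N$. The first step is to check that this sum is finite and independent of $q$. Each $P_r$ is non-empty and the $P_r$ partition the multiset $\{i^{m_i}\}$, which has $|\mathbf m|$ elements, so $P_k(\mathbf m,\mathcal G)=\emptyset$ for $k>|\mathbf m|$. Hence $W_k=0$ for $k>|\mathbf m|$, and the sum runs only over $1\le k\le|\mathbf m|$. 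Each $W_k$ is a finite non-negative integer that does not depend on $q$.

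Next, each $\binom qk=\frac{q(q-1)\cdots(q-k+1)}{k!}$ is a polynomial in $q$ of degree $k$. The right-hand side is therefore a polynomial $p(q)\in\mathbb Q[q]$ of degree at most $|\mathbf m|$. Since $_{\mathbf m}\Pi^2_{\mathcal G}(q)=p(q)$ for all $q\in\mathbb N$, we identify the counting function with $p$. This is the sense in which it "is a polynomial", and it allows evaluation at arbitrary $q$, for instance $q=-1$ later.

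For the linear coefficient it suffices to compute $\binom qk[q]$ for each $k\ge1$ and sum. Since $\binom qk=\frac{1}{k!}\prod_{j=0}^{k-1}(q-j)$ has the factor $q$, its coefficient of $q$ is
\[
\frac{1}{k!}\prod_{j=1}^{k-1}(-j)=\frac{(-1)^{k-1}(k-1)!}{k!}=\frac{(-1)^{k-1}}{k}.
\]
Multiplying by $W_k$ and summing over $k$ gives the stated formula.

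There is no real obstacle here. The only point needing care is the finiteness and $q$-independence of the $W_k$, which justifies treating the sum as a genuine finite polynomial identity rather than just a formula valid on $\mathbb N$.
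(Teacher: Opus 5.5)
Your proposal is correct and follows the paper's argument: you read off polynomiality from Proposition~\ref{prop2}, and you compute the coefficient of $q$ in $\binom{q}{k}=\frac{q(q-1)\cdots(q-k+1)}{k!}$ as $\frac{(-1)^{k-1}(k-1)!}{k!}=\frac{(-1)^{k-1}}{k}$. Your observation that $P_k(\mathbf m,\mathcal G)=\emptyset$ for $k>|\mathbf m|$ shows the sum is a finite polynomial with $q$-independent coefficients, a point the paper leaves implicit.
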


\begin{proof} We get the desired result by noticing that  the coefficient of \(q\) in the  polynomial
    \[
\binom qk
=
\frac{q(q-1)\cdots(q-k+1)}{k!}.
\]
 is given by
\[
\frac{(-1)^{k-1}(k-1)!}{k!}
=
\frac{(-1)^{k-1}}{k}.
\]
\end{proof}

\iffalse
\begin{proof}
Polynomiality follows immediately from the previous proposition, since each \(\binom qk\) is a
polynomial in \(q\). Also
\[
\binom qk
=
\frac{q(q-1)\cdots(q-k+1)}{k!}.
\]
The coefficient of \(q\) in this polynomial is
\[
\frac{(-1)^{k-1}(k-1)!}{k!}
=
\frac{(-1)^{k-1}}{k}.
\]
Therefore
\[
[q]\,m\Pi_G^{2\text{-mark}}(\mathbf m;q)
=
\sum_{k\ge1}
W_k^{(2)}(\mathbf m,G)[q]\binom qk
=
\sum_{k\ge1}\frac{(-1)^{k-1}}{k}W_k^{(2)}(\mathbf m,G).
\]
\end{proof}\fi
The following is the main theorem of this section, which gives a similar expression as in  \cite[Theorem 1]{CDV} for double-marked independence series
\begin{thm}
Let $\mathcal{G}$ be a simple graph with a countable vertex set $I$. For every \(q\in \mathbb{Z}\), we have as formal series
$$ I_{mark}^2(\mathcal{G}, \bold x)^q =
\sum_{\mathbf {m}\in\mathbb{Z}_{\ge0}^{I}} {_\mathbf{m}}\Pi_\mathcal G^{2}(q)\mathbf x^{\mathbf m}.
$$

\end{thm}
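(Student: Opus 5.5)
We will prove the identity by expanding $I^2_{mark}(\mathcal G,\mathbf x)^q=(1+J)^q$, where $J:=I^2_{mark}(\mathcal G,\mathbf x)-1$, with the binomial series, and then matching coefficients against Proposition~\ref{prop2}. Since $J$ has zero constant term, for every $q\in\mathbb Z$ the formal identity
\[
(1+J)^q=\sum_{k\ge 0}\binom{q}{k}J^k
\]
holds in $\mathcal R$. The sum is locally finite: a given monomial $\mathbf x^{\mathbf m}$ can only appear in $J^k$ for $k\le|\mathbf m|$. For $q\in\mathbb N$ this is the ordinary binomial theorem. For negative $q$ it is the generalized binomial (geometric-type) expansion, valid because $\binom{q}{k}$ is a polynomial in $q$ and both sides are polynomial in $q$ coefficientwise.

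\textbf{Computing the coefficients of $J^k$.} By definition of the double-marked independence series, $J=\sum_{P}2^{|\sigma(P)|}\mathbf x(P)$, where $P$ ranges over the non-empty stable multi-subsets of $I$. Hence $J^k$ is a sum over ordered $k$-tuples $(P_1,\dots,P_k)$ of non-empty stable multi-sets, each weighted by $\prod_r 2^{|\sigma(P_r)|}$. Such a tuple contributes to $\mathbf x^{\mathbf m}$ exactly when $P_1\sqcup\dots\sqcup P_k=\{i^{m_i}\}$, that is, when $(P_1,\dots,P_k)\in P_k(\mathbf m,\mathcal G)$. Therefore
\[
J^k[\mathbf x^{\mathbf m}]=\sum_{(P_1,\dots,P_k)\in P_k(\mathbf m,\mathcal G)}\prod_{r=1}^k2^{|\sigma(P_r)|}.
\]

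\textbf{Matching with Proposition~\ref{prop2}.} For $\mathbf m\neq 0$, combining the two previous steps gives
\[
I^2_{mark}(\mathcal G,\mathbf x)^q[\mathbf x^{\mathbf m}]=\sum_{k\ge1}\binom qk\,J^k[\mathbf x^{\mathbf m}].
\]
This is exactly the expression for ${}_{\mathbf m}\Pi^2_{\mathcal G}(q)$ given in Proposition~\ref{prop2} when $q\in\mathbb N$. For $\mathbf m=0$, both sides equal $1$, taking the convention ${}_{\mathbf 0}\Pi^2_{\mathcal G}(q)=1$ (there is exactly one coloring, the empty one).

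\textbf{Extending to all $q\in\mathbb Z$.} For $q\le 0$ we interpret ${}_{\mathbf m}\Pi^2_{\mathcal G}(q)$ as the value of the polynomial from the Corollary. The identity then holds for all $q\in\mathbb Z$, since both sides are the same polynomial in $q$ coefficientwise.

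\textbf{Main obstacle.} The only delicate point is the validity of the binomial expansion for negative $q$ in $\mathcal R$ when $I$ is countably infinite. This reduces to observing that each coefficient involves only finitely many variables and finitely many $k$, so one may restrict to the finite induced subgraph on $\supp(\mathbf m)$. This mirrors the argument for Theorem~\ref{expmarkedindp} in \cite{CDV}.
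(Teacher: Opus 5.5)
Your proof is correct and follows essentially the same route as the paper: expand $(1+J)^q=\sum_{k}\binom{q}{k}J^k$, identify $J^k[\mathbf x^{\mathbf m}]$ as the weighted count $\sum_{(P_1,\dots,P_k)\in P_k(\mathbf m,\mathcal G)}\prod_r 2^{|\sigma(P_r)|}$, and invoke Proposition~\ref{prop2}, treating $\mathbf m=0$ separately. One minor point is that your reason for the expansion at negative $q$ (``both sides are polynomial in $q$ coefficientwise'') presupposes what it should establish on the left-hand side; it is cleaner to note directly that $(1+J)^{-n}=\bigl(\sum_{j\ge0}(-J)^j\bigr)^n=\sum_{k\ge0}(-1)^k\binom{n+k-1}{k}J^k$, which is the standard fact the paper also takes for granted.
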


\begin{proof}
For any formal power series $f \in \mathcal R$ with a constant term $1$, we have
\[
f^q = \sum_{k \ge 0} \binom{q}{k}(f-1)^k,
\]
where for negative integers we use the standard convention $\binom{-n}{k} = (-1)^k \binom{n+k-1}{k}$ for $n \in \mathbb{N}$.

Set $f=I(\mathcal{G}, \frac{2\mathbf x}{1-\mathbf x})$, then using $\frac{2 x}{1- x}=\sum\limits_{k\ge 1}2x^k$, we get \iffalse and note that 
\begin{align*}
    f-1 &= I(\mathcal{G}, \frac{2\mathbf x}{1-\mathbf x})-1 = \sum_{S\in \mathcal{I(G)}\setminus\{\emptyset\}}\prod_{i\in S}\left(\frac{2x_i}{1-x_i}\right)\\
    &=\sum_{S\in \mathcal{I(G)}\setminus\{\emptyset\}}\prod_{i\in S}\left(2\sum_{l_i\ge 1}x_i^{l_i}\right) = \sum_{S\in \mathcal{I(G)}\setminus\{\emptyset\}} 2^{|S|}\prod_{i\in S}\left(\sum_{l_i\ge 1}x_i^{l_i}\right)\\
    &=\sum_{S\in \mathcal{I}_{mult}(\mathcal{G})\setminus\{\emptyset \}}2^{|\operatorname{supp}(S)|}\left(\prod_{i\in S}x_i\right)
\end{align*}\fi
 $$ f-1 =\sum_{S\in \mathcal{I}_{mult}(\mathcal{G})\setminus\{\emptyset \}}2^{|\sigma(S)|}\left(\prod_{i\in S}x_i\right).$$

Thus, for $k\geq 1$ and $\mathbf{m}\neq 0$, the coefficient $(f-1)^k[x^\mathbf{m}]$ is given by 
        $$\sum_{(S_1,S_2, \ldots, S_k)}\prod_{r=1}^k 2^{|\sigma(S_r)|}$$
        where the sum ranges over all $k$-tuples  $(S_1, S_2, \ldots, S_k)$ satisfying the following conditions:
    \begin{enumerate}
        \item $S_r$ is multi-set and $S_r \in \mathcal{I}_{\mathrm{mult}}(\mathcal{G})\setminus \{\emptyset\}$ for all $1 \leq r \leq k;$
        \item the disjoint union (as multi-sets) of $S_1, \ldots, S_k$ is equal to the multi-set $$\{\underbrace{i, \ldots,i}_{m_i\text{-times}} : i\in\mathrm{supp}(\bold{m})\}.$$
    \end{enumerate}
    It follows that $(S_1, \dots, S_k) \in P_k(\mathbf{m}, \mathcal{G})$, and each element of $P_k(\mathbf{m}, \mathcal{G})$  is obtained in this way. Consequently, the sum ranges over all elements of $P_k(\mathbf{m}, \mathcal{G})$, and we obtain

     $$\sum_{(S_1,\dots,S_k)\in P_k(\mathbf{m},\mathcal{G})}\prod_{r=1}^k 2^{|\sigma(S_r)|}.$$ 
    Therefore, for all $\mathbf{m} \neq 0$, Proposition~\ref{prop2} gives

        $$I\left(\mathcal{G}, \frac{2\mathbf x}{1-\mathbf x}\right)^q [\mathbf{x}^\mathbf{m}]=\sum_{k\geq 1}\binom{q}{k}(f-1)^k[\mathbf{x}^\mathbf{m}] ={_{\bold{m}}\Pi^{2}_\mathcal{G}(q)}.$$
This completes the proof; the coefficients for $\mathbf{m}=0$ clearly coincide.
\end{proof}

We have the following immediate corollary:
\begin{cor}
For every \(q\in\mathbb{Z}\),
\[
H_{\RAAG(\mathcal G)}(\mathbf x)
%I_{mark}^2(\mathcal G, -\mathbf x)^{-1}
=
\sum_{\mathbf m\in\mathbb{Z}_{\ge0}^{I}}
(-1)^{|\mathbf m|}
{_\mathbf{m}}\Pi_\mathcal G^{2}(-1)
\mathbf x^{\mathbf m}.
\] \qed
\end{cor}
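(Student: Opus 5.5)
This is a direct combination of the RAAG corollary of Section~\ref{sect4} with the preceding theorem on powers of $I_{mark}^2(\mathcal G,\mathbf x)$. The argument has three steps.

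\textbf{Step 1: invert via the $q$-power theorem.} The RAAG corollary gives
\[
H_{\RAAG(\mathcal G)}(\mathbf x)=I_{mark}^2(\mathcal G,-\mathbf x)^{-1}.
\]
The preceding theorem holds for every $q\in\mathbb Z$. Taking $q=-1$ gives the identity of formal power series
\[
I_{mark}^2(\mathcal G,\mathbf x)^{-1}=\sum_{\mathbf m\in\mathbb Z_{\ge0}^I}{_\mathbf{m}}\Pi_\mathcal G^{2}(-1)\,\mathbf x^{\mathbf m}.
\]
Here the values at $q=-1$ are meaningful because ${_\mathbf{m}}\Pi_\mathcal G^{2}(q)$ is a polynomial in $q$. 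Indeed, Proposition~\ref{prop2} and the subsequent corollary show it is a finite sum of the $\binom qk$. The proof of the theorem used $\binom{q}{k}$ with the negative-integer convention, so the evaluation at $q=-1$ is exactly what appears there.

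\textbf{Step 2: substitute $x_i\mapsto -x_i$.} Next we apply the substitution $x_i\mapsto -x_i$ for all $i\in I$. This is a continuous $\mathbb F$-algebra automorphism of $\mathcal R$. It commutes with taking inverses of series with constant term $1$, and it sends $\mathbf x^{\mathbf m}$ to $(-1)^{|\mathbf m|}\mathbf x^{\mathbf m}$. Applying it to the identity from Step 1 and comparing with the RAAG corollary yields the stated expansion with coefficients $(-1)^{|\mathbf m|}\,{_\mathbf{m}}\Pi_\mathcal G^{2}(-1)$.

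\textbf{Step 3: check the details.} There is essentially no obstacle. The only point needing a moment's care is that the substitution is well defined on $\mathcal R$ when $I$ is countably infinite. This holds because each coefficient involves only finitely many monomials, and substitution acts monomial-wise. The $\mathbf m=0$ term is $1$ on both sides, as in the proof of the theorem. The phrase ``for every $q\in\mathbb Z$'' in the statement is vacuous, since only $q=-1$ is used.
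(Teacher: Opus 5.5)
Your proposal is correct and is exactly the argument the paper intends when it marks this corollary as immediate: it takes $H_{\RAAG(\mathcal G)}(\mathbf x)=I_{mark}^2(\mathcal G,-\mathbf x)^{-1}$ from the RAAG corollary, specializes the preceding theorem to $q=-1$, and then applies the substitution $x_i\mapsto -x_i$. Your side remarks are also accurate: polynomiality of ${}_{\mathbf m}\Pi^{2}_{\mathcal G}(q)$ is what justifies evaluating at $q=-1$, the $\mathbf m=0$ term is $1$ on both sides, and the quantifier ``for every $q\in\mathbb Z$'' in the statement is vacuous.
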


\subsection{} In this section, we write the double-marked chromatic polynomials as sum of ordinary chromatic polynomials, this would help us to compute these polynomials explicitly in terms of graphs for large family of graphs. We denote \(\pi_H(q)\) by the ordinary chromatic polynomial of a finite simple
graph \(H\).

\begin{defn} Let $\mathbf m=(m_i:i\in I)\in\mathbb{Z}_{\ge0}^{I}$ be a tuple of non-negative integers with finite support. We define the join of $\mathcal{G}$ with respect to $\bold{m}$, denoted by $\mathcal{G}(\bold{m})$, as follows: replace
vertex \(i\) by a clique of size \(m_i\) (i.e., the completed graph of size $m_i$) and join each vertex of \(i\)-th clique with every vertex of \(j\)-th clique by edges if
 \((i,j)\in E\), otherwise do  not add any edges between those cliques. \end{defn}

 \subsection{}\label{sect5.3}
For a partition \(\lambda_i\vdash m_i\), let \(\ell(\lambda_i)\) denotes its length, and let
\(d_r^{\lambda_i}\) be the number of parts of \(\lambda_i\) equal to \(r\). Define
\[
S(\mathbf m)
=
\left\{
\lambda=(\lambda_i: i\in I): \lambda_i\vdash m_i \text{ for all } i\in\supp(\mathbf m)
\right\}.
\]
For \(\lambda\in S(\mathbf m)\), define
\[
s(\lambda)=\big(\ell(\lambda_i):i\in I\big),
\qquad
L(\lambda)=\sum_{i\in\supp(\mathbf m)}\ell(\lambda_i).
\]

\begin{prop}
Let \(\mathcal G=(I,E)\) be a simple graph and let $\mathbf m=(m_i:i\in I)\in\mathbb{Z}_{\ge0}^{I}$ be a tuple of non-negative integers with finite support. Then
\[
{_\bold m}\Pi_\mathcal G^{2}(q)
=
\sum_{\lambda\in S(\mathbf m)}
2^{L(\lambda)}
\frac{
\pi_{G(s(\lambda))}(q)
}{
\displaystyle
\prod_{i\in\supp(\mathbf m)}
\prod_{r\ge1} d_r^{\lambda_i}!
}.
\]

\end{prop}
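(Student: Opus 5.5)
We will prove the identity by a direct double count of double-marked multi-colorings, grouped according to how the colors are arranged at each vertex. Fix $q\in\mathbb N$ and a double-marked multi-coloring $i\mapsto (M_i,\epsilon_i)$ associated with $\mathbf m$. For each $i\in\supp(\mathbf m)$, the multiset $M_i$ has size $m_i$. Its distinct colors, say $a_{i,1},\dots,a_{i,\ell_i}$, occur with multiplicities that form a partition $\lambda_i\vdash m_i$ of length $\ell_i=|\sigma(M_i)|$. So every double-marked multi-coloring determines a tuple $\lambda=(\lambda_i)\in S(\mathbf m)$. We will count, for each fixed $\lambda$, the multi-colorings with this multiplicity type. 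Summing over $\lambda\in S(\mathbf m)$ then gives the total.

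\textbf{Step 1: signs.} The sign data $\epsilon_i:\sigma(M_i)\to\{+,-\}$ can be chosen freely: $|\sigma(M_i)|=\ell(\lambda_i)$, and condition (2) of the definition involves only the $M_i$. This contributes a factor $\prod_i 2^{\ell(\lambda_i)}=2^{L(\lambda)}$. It therefore remains to show that the number of families $(M_i)$ of type $\lambda$ satisfying $|M_i|=m_i$ and $M_i\cap M_j=\emptyset$ for $\{i,j\}\in E$ equals $\pi_{\mathcal G(s(\lambda))}(q)\big/\prod_i\prod_r d_r^{\lambda_i}!$.

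\textbf{Step 2: relating to proper colorings of the join.} Fix $\lambda$ and list the parts of each $\lambda_i$ in weakly decreasing order as $\lambda_{i,1}\ge\dots\ge\lambda_{i,\ell(\lambda_i)}$. The graph $\mathcal G(s(\lambda))$ has a clique $K_i$ on vertices $v_{i,1},\dots,v_{i,\ell(\lambda_i)}$ for each $i$. Consider a proper $q$-coloring $c$ of $\mathcal G(s(\lambda))$. Send $c$ to the family $M_i=\{c(v_{i,t})^{\lambda_{i,t}}: 1\le t\le\ell(\lambda_i)\}$. Because $K_i$ is a clique, the colors $c(v_{i,t})$ are distinct, so $M_i$ has multiplicity type exactly $\lambda_i$. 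Because every vertex of $K_i$ is joined to every vertex of $K_j$ when $\{i,j\}\in E$, we get $\sigma(M_i)\cap\sigma(M_j)=\emptyset$, which is exactly the required disjointness. Conversely, every family of type $\lambda$ arises in this way. Its preimages are obtained by assigning the distinct colors of $M_i$ of multiplicity $r$ bijectively to the vertices $v_{i,t}$ with $\lambda_{i,t}=r$. This can be done in $\prod_r d_r^{\lambda_i}!$ ways, independently for each $i$, and every such assignment is again a proper coloring, since properness depends only on the sets $\sigma(M_i)$. The map is therefore exactly $\prod_i\prod_r d_r^{\lambda_i}!$-to-one, which yields the claimed count.

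\textbf{Main obstacle.} The only delicate point is this fiber count. One must check that permuting colors among equal-multiplicity vertices within a single clique never destroys properness and never changes $M_i$, while permuting colors among vertices of different multiplicity does change $M_i$. Both hold because the edges of the join depend only on the clique labels $i$ and not on the individual vertex. Combining Steps 1 and 2 and summing over $\lambda$ gives the formula for every $q\in\mathbb N$, which suffices since the statement concerns $q\in\mathbb N$. Consistency with the polynomiality in Proposition~\ref{prop2} is automatic.
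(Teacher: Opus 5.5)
Your proposal is correct and follows essentially the same route as the paper. You group the double-marked multi-colorings by their multiplicity type $\lambda\in S(\mathbf m)$ and factor out the free sign choices $2^{L(\lambda)}$. You then show that proper $q$-colorings of the join $\mathcal G(s(\lambda))$ map onto the sign-free colorings of type $\lambda$ with fibers of size $\prod_{i}\prod_{r} d_r^{\lambda_i}!$, arising from permuting colors among equal-multiplicity vertices of a clique; your check that this map is exactly $\prod_i\prod_r d_r^{\lambda_i}!$-to-one is stated more carefully than the corresponding step in the paper.
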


\begin{proof}
Let ${_{\bold{m}}\Gamma^{2\text{-mark}}_\mathcal{G}}$ be a double-marked multi-coloring of $\mathcal{G}$ associated to $\bold{m}$ using at most $q$ colors.
By definition, for each vertex $i$,  ${_{\bold{m}}\Gamma^{2\text{-mark}}_\mathcal{G}}(i)$ form a multi-subset of $\{1,\dots,q\}$ of size $m_i$, where each distinct color appearing at ${_{\bold{m}}\Gamma^{2\text{-mark}}_\mathcal{G}}(i)$ is additionally assigned a sign from $\{+,-\}$.  We associate an element $\boldsymbol{\lambda} = (\boldsymbol{\lambda}_i)_{i\in I}\in S(\bold m)$ and a family of ordinary vertex colorings of $\mathcal{G}(\bold s(\boldsymbol{\lambda}))$ as follows. If $m_i=0$, then $\boldsymbol{\lambda}_i=\emptyset$. Otherwise $i\in \mathrm{supp}(\mathbf{m})$ and let the underlying multi-set of colors at $i$ (forgetting the signs) be 
$${_{\bold{m}}\Gamma_{\mathcal{G}}}(i)=\{1^{a_{i,1}},\dots,q^{a_{i,q}}\}.$$

Then $\boldsymbol{\lambda}_i=(\lambda^1_i\ge \cdots \ge \lambda^{\ell(\boldsymbol{\lambda}_i)}_i>0)$ is defined to be the unique partition of $m_i$ associated to the set $\{a_{i,1},\dots,a_{i,q}\}$. Now consider the graph $\mathcal{G}(\bold s(\boldsymbol{\lambda}))$ and fix an ordering on the vertices of each clique (recall that the size of the cliques are given by $\ell(\boldsymbol{\lambda}_i)$). For any choice of distinct colors $c_1,\dots,c_{\ell(\boldsymbol{\lambda}_i)}$ with
\begin{equation}\label{4re}
\lambda^1_i=a_{i,c_1},\dots,\lambda^{\ell(\boldsymbol{\lambda}_i)}_i=a_{i,c_{\ell(\boldsymbol{\lambda}_i)}}
\end{equation}
we color the first vertex of the $i$-th clique of $\mathcal{G}(\bold s(\boldsymbol{\lambda}))$ by $c_1$, the second vertex with $c_2$ and so on. This gives an ordinary vertex coloring. It is easy to see that the number of choices of colors satisfying \eqref{4re} and thus the number of ordinary colorings we get for this fixed underlying multi-set is exactly $\prod_{i\in \mathrm{supp}(\bold m)}\prod_{k=1}^{\infty}(d^{\boldsymbol{\lambda}_i}_{k})!$.

Now conversely let $\boldsymbol{\lambda} = (\boldsymbol{\lambda}_i)_{i\in I}\in S(\bold m)$ and fix an ordering on the vertices of $\mathcal{G}(\bold s(\boldsymbol{\lambda}))$. Let ${\tau}_{\mathcal{G}(\bold s(\boldsymbol{\lambda}))}$ be a vertex coloring of $\mathcal{G}(\bold s(\boldsymbol{\lambda}))$ and suppose that the first vertex of the $i$-th clique receives color $c_1$, the second $c_2$ and so on.
We associate a family of double-marked multi-colorings ${_{\bold{m}}\Gamma^{2\text{-mark}}_\mathcal{G}}$ of $\mathcal{G}$ associated to $\bold{m}$ as follows. First, define the underlying multi-set of colors at $i$ by 
$${_{\bold{m}}\Gamma_{\mathcal{G}}}(i)=\{c_1^{\lambda^1_i},\dots,c_r^{\lambda^r_i}\},\ \text{ where $\boldsymbol{\lambda}_i=(\lambda^1_i\ge \cdots \ge \lambda^r_i>0)$}.$$
To obtain a double-marked multi-coloring, we must assign a sign from $\{+,-\}$ to each distinct color appearing at $i$. Since there are $r=\ell(\boldsymbol{\lambda}_i)$ distinct colors at vertex $i$, there are $2^{\ell(\boldsymbol{\lambda}_i)}$ independent choices of signs at $i$. Over all $i\in I$, this gives $\prod_{i\in I} 2^{\ell(\boldsymbol{\lambda}_i)} = 2^{L(\boldsymbol{\lambda})}$ choices of signs.

Again, if $\lambda_{i}^{r_1}=\lambda_{i}^{r_2}$, then the vertex coloring obtained from ${\tau}_{\mathcal{G}(\bold s(\boldsymbol{\lambda}))}$ by interchanging the colors $c_{r_1}$ and $c_{r_2}$ in the $i$-th clique gives the same double-marked multi-coloring. Hence $\prod_{i\in \mathrm{supp}(\bold m)}\prod_{k=1}^{\infty}(d^{\boldsymbol{\lambda}_i}_{k}!)$ number of usual vertex colorings of $\mathcal{G}(\bold s(\boldsymbol{\lambda}))$ correspond to exactly $2^{L(\boldsymbol{\lambda})}$ double-marked multi-colorings of $\mathcal{G}$ associated to $\bold{m}$. Taking the sum over all such tuples of partitions $\boldsymbol{\lambda}\in S(\bold m)$ gives ${_{\bold{m}}\Pi^{2}_\mathcal{G}}(q)$, and the claim follows.

\end{proof}

We also have similar expression for marked chromatic polynomials in terms of ordinary chromatic polynomials (see \cite[Proposition 3.3]{CDV}).
\begin{prop}
Let \(\mathcal G=(I,E)\) be a simple graph and let $\mathbf m=(m_i:i\in I)\in\mathbb{Z}_{\ge0}^{I}$ be a tuple of non-negative integers with finite support. Then
\[
{_\bold m}\Pi_\mathcal G^{mark}(q)
=
\sum_{\lambda\in S(\mathbf m)}
\frac{
\pi_{G(s(\lambda))}(q)
}{
\displaystyle
\prod_{i\in\supp(\mathbf m)}
\prod_{r\ge1} d_r^{\lambda_i}!
}.
\]

\end{prop}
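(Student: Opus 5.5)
The identity for marked colorings follows by the same double counting as in the preceding proposition for ${_\bold m}\Pi_\mathcal G^{2}(q)$, with the sign data removed. Fix $q\in\mathbb N$ and a marked multi-coloring ${_\bold m}\Gamma^{\mathrm{mark}}_\mathcal G$. For each $i\in\supp(\mathbf m)$, write its multiset of colors as $\{1^{a_{i,1}},\dots,q^{a_{i,q}}\}$. The nonzero multiplicities $a_{i,c}$, sorted in decreasing order, form a partition $\lambda_i\vdash m_i$. This attaches to each marked multi-coloring a well-defined $\lambda\in S(\mathbf m)$, so the set of marked multi-colorings splits into disjoint classes, one for each $\lambda$. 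It therefore suffices to show that the class of $\lambda$ has exactly $\pi_{\mathcal G(s(\lambda))}(q)/\prod_{i}\prod_r d_r^{\lambda_i}!$ elements.

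To count one class, fix an ordering of the $\ell(\lambda_i)$ vertices in the $i$-th clique of $\mathcal G(s(\lambda))$. Send a proper coloring $\tau$ of $\mathcal G(s(\lambda))$ to the marked multi-coloring that, at vertex $i$, gives the color $\tau(k\text{-th vertex of clique } i)$ multiplicity $\lambda_i^{k}$.
\begin{itemize}
\item The colors inside a clique are distinct, so the resulting multiset at $i$ has size $m_i$ and its multiplicity partition is exactly $\lambda_i$.
\item If $\{i,j\}\in E$, every vertex of clique $i$ is joined to every vertex of clique $j$. Hence the color sets at $i$ and $j$ are disjoint, which is condition (ii) of the definition.
\end{itemize}
So the map lands in the class of $\lambda$.

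Conversely, take a marked multi-coloring in the class of $\lambda$. Its preimages are obtained by listing the distinct colors at $i$ in an order whose multiplicities read $\lambda_i^1\ge\lambda_i^2\ge\cdots$. The only freedom is to permute colors that carry equal multiplicity. Within vertex $i$ this gives $\prod_r d_r^{\lambda_i}!$ choices, and the choices at different vertices are independent. Each such listing is a proper coloring of $\mathcal G(s(\lambda))$: colors inside a clique are distinct, and adjacency between cliques is respected because of condition (ii). Thus the map is surjective onto the class and every fiber has size exactly $\prod_{i\in\supp(\mathbf m)}\prod_r d_r^{\lambda_i}!$. Dividing and summing over $\lambda\in S(\mathbf m)$ gives the formula.

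The step needing the most care is this fiber count. One must check that reordering colors of equal multiplicity never changes the marked coloring, and that any two listings producing the same marked coloring differ only by such a reordering. Both facts hold because the marked coloring at $i$ is determined by the function color $\mapsto$ multiplicity. Alternatively, one can note that ${_\bold m}\Pi_\mathcal G^{2}(q)$ counts pairs (marked coloring, sign choice), with $2^{\ell(\lambda_i)}$ sign choices at each $i$ because $\lambda_i$ has $\ell(\lambda_i)$ distinct colors. The two identities are then equivalent termwise in $\lambda$, and deleting the factor $2^{L(\lambda)}$ from the previous proposition gives the result directly.
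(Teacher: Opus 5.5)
Your proof is correct and is essentially the paper's approach. The paper itself only cites \cite[Proposition 3.3]{CDV} for this marked version, but its proof of the preceding double-marked proposition is exactly your double counting with signs added: classify colorings by the multiplicity type $\lambda$, map proper colorings of $\mathcal G(s(\lambda))$ onto that class, and observe that each fiber has size $\prod_{i\in\supp(\mathbf m)}\prod_{r\ge1} d_r^{\lambda_i}!$. Your closing shortcut of deleting $2^{L(\lambda)}$ is also valid, provided you use the class-by-class count that this proof actually establishes rather than only the summed identity.
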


\section{Chordal graphs and $PEO$-graphs and explicit formulas}\label{sect6}
In this section, we give explicit formulas for the growth series of right angled Artin groups and right angled Coxeter groups associated to chordal graphs or more generally for $PEO$-graphs. 
We begin with recalling the definition of chordal graphs and $PEO$-graphs. 
\begin{defn}
\begin{enumerate}
    \item 
    A \textit{chordal graph} is a finite simple graph in which all cycles of length four or more have an edge that is not
part of the cycle but it connects two vertices of the cycle. Such an edge is called a chord. 
\item A total ordering $\leq $ on $I$ (the vertices of $\mathcal{G}$) is called a \textit{perfect elimination ordering} if for each $r\in I$, the subgraph $\mathcal{G}_r$ induced by the set of vertices 
 $$\{i\in I\setminus \{r\}: \{i,r\}\in E, i<r\}\cup \{r\}$$ 
 is a finite complete subgraph. 
 
 \item A graph $\mathcal{G}$ is called a $PEO$-graph if there exists a perfect elimination ordering on its vertices.

\end{enumerate}
\end{defn}
It is well-known that a finite simple graph is chordal if and only it is a $PEO$-graph. 

\subsection{}\label{sect6.1} We have explcit formulas for marked chromatic polynomials, it was  first proved in \cite[Theorem 2]{CDV}.
\begin{thm}\label{peomarked}
    Let $\mathcal{G}$ be a PEO-graph with a countable vertex set $I$. We have
     \begin{equation}
    {_{\bold{m}}\Pi^{\mathrm{mark}}_\mathcal{G}}(q)=\sum\limits_{\boldsymbol{\lambda}\in S(\bold m)}\prod _{j\in \mathrm{supp}(\bold{m})} \binom{q-b^{\boldsymbol{\lambda}}_{j}}{\ell(\boldsymbol{\lambda}_{j})} \frac{\ell(\boldsymbol{\lambda}_{j})!}{\prod_{k=1}^{\infty}(d^{\boldsymbol{\lambda}_{j}}_{k}!)},\ \ b^{\boldsymbol{\lambda}}_{j}=\sum\limits_{\substack{i\in \mathcal{G}_j\backslash\{j\}}}\ell(\boldsymbol{\lambda}_{i})
        \end{equation}
        \end{thm}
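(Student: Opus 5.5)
The plan is to start from the expression of ${_{\bold m}}\Pi^{\mathrm{mark}}_\mathcal{G}(q)$ as a sum of ordinary chromatic polynomials, namely the last Proposition of Section \ref{sect5} (\cite[Proposition 3.3]{CDV}):
\[
{_\bold m}\Pi_\mathcal G^{mark}(q)=\sum_{\lambda\in S(\mathbf m)}\frac{\pi_{\mathcal G(s(\lambda))}(q)}{\prod_{i\in\supp(\mathbf m)}\prod_{r\ge1} d_r^{\lambda_i}!}.
\]
It then suffices to evaluate, for each fixed $\boldsymbol{\lambda}\in S(\bold m)$, the chromatic polynomial of the join graph $\mathcal{G}(s(\boldsymbol{\lambda}))$. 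Only the vertices in $\supp(\bold m)$ contribute nonempty cliques, so this is a finite graph even when $I$ is countable.

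The key step is to show that the given perfect elimination ordering on $I$ induces one on $\mathcal{G}(s(\boldsymbol{\lambda}))$. Order the blown-up vertices by listing the cliques in the order of $I$, with the vertices inside each clique ordered arbitrarily. Take a vertex $v$ in the $j$-th clique. Its earlier neighbours are the earlier vertices of its own clique, together with all vertices of the $i$-th cliques for $i<j$ with $\{i,j\}\in E$. Since $\mathcal{G}_j$ is complete, these $i$ are pairwise adjacent in $\mathcal{G}$, so their cliques are fully joined to each other and to the $j$-th clique. Hence the earlier neighbourhood of $v$, together with $v$, is a clique.

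We then apply the standard greedy count for PEO graphs. Colour the vertices in order; each vertex has exactly as many forbidden colours as earlier neighbours. The $t$-th vertex ($t=0,\dots,\ell(\boldsymbol{\lambda}_j)-1$) of the $j$-th clique has $b_j^{\boldsymbol{\lambda}}+t$ earlier neighbours, where $b^{\boldsymbol{\lambda}}_j=\sum_{i\in\mathcal{G}_j\setminus\{j\}}\ell(\boldsymbol{\lambda}_i)$. This gives
\[
\pi_{\mathcal G(s(\boldsymbol\lambda))}(q)=\prod_{j\in\supp(\bold m)}\prod_{t=0}^{\ell(\boldsymbol\lambda_j)-1}(q-b^{\boldsymbol\lambda}_j-t)=\prod_{j\in\supp(\bold m)}\binom{q-b^{\boldsymbol\lambda}_j}{\ell(\boldsymbol\lambda_j)}\ell(\boldsymbol\lambda_j)!.
\]
Substituting this and distributing the denominator $\prod_j\prod_k d_k^{\boldsymbol\lambda_j}!$ over the factors yields the stated formula. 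Vertices $j\notin\supp(\bold m)$ have empty cliques, contribute $\ell=0$ to the sums $b_j$, and contribute a factor of $1$.

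The main obstacle is justifying the greedy product formula, especially when $q$ is small or negative. I would prove it for all $q\in\mathbb N$ by induction on the number of vertices, deleting the last vertex in the elimination order. The count is exact, including the case where it is zero, because the earlier neighbours form a clique and therefore receive distinct colours. Polynomial identity then extends it to all $q$. A secondary point is to check that the ordering inside cliques is harmless and that the finiteness of $\mathcal{G}_j$ is only used on $\supp(\bold m)$.
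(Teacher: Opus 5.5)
Your proof is correct, but it takes a different route from the paper's. The paper does not reprove this statement; it quotes it from [CDV, Theorem 2]. Its proof of the double-marked analogue mirrors that argument and counts marked multi-colorings of a fixed type $\lambda$ directly on $\mathcal{G}$. Going through $\mathrm{supp}(\mathbf m)$ in PEO order, the vertices of $\mathcal{G}_n\setminus\{n\}$ are pairwise adjacent. So their colour multisets are disjoint and use exactly $b^{\lambda}_n$ colours. This leaves $\binom{q-b^{\lambda}_n}{\ell(\lambda_n)}$ choices of fresh colours and $\ell(\lambda_n)!/\prod_k d^{\lambda_n}_k!$ ways to distribute the parts of $\lambda_n$ among them.

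You instead pass through the decomposition into chromatic polynomials of the join graphs $\mathcal{G}(s(\lambda))$. You show these inherit a perfect elimination ordering and then apply the classical product formula for chordal chromatic polynomials. In your version the factor $1/\prod_k d^{\lambda_j}_k!$ enters only at the end, as a symmetry correction, rather than as a count of multiplicity assignments.

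What each route buys:
\begin{itemize}
\item Your version is more modular and rests on two standard facts. It also makes explicit the induction that justifies the factorwise product for small $q$: an existing colouring of the earlier vertices forces $q\ge b^{\lambda}_j+t$. The direct count needs the same fact but leaves it implicit.
\item The paper's version is shorter and needs neither the decomposition proposition nor the lemma on joins. It adapts to the double-marked case by inserting $2^{\ell(\lambda_n)}$ at each step. Yours adapts equally well via the double-marked decomposition.
\end{itemize}
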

The above theorem has the following immediate corollary (see \cite[Corollary 6.3]{CDV}. We fix the perfect elimination ordering on $I$ and  set $\mathcal{G}_r$ by the subgraph of $\mathcal{G}$ induced by the set of vertices 
 $\{i\in I\setminus \{r\}: \{i,r\}\in E, i<r\}\cup \{r\}$ in what follows next.
        \begin{cor}\label{racg}
 Let $\mathcal{G}$ be a PEO-graph with a countable vertex set $I$. Then we have 
$$H_{RACG(\mathcal{G})}(\bold x) =\sum\limits_{\boldsymbol{m}\in \mathbb{Z}^I_+}\left(\sum\limits_{\boldsymbol{\lambda}\in S(\bold m)}(-1)^{|\mathbf{m}|+\sum_{j\in \mathrm{supp}(\bold{m})}\ell(\boldsymbol{\lambda}_{j})}\prod _{j\in \mathrm{supp}(\bold{m})} \binom{\sum\limits_{i\in \mathcal{G}_j}\ell(\boldsymbol{\lambda}_{i})}{\ell(\boldsymbol{\lambda}_{j})} \frac{\ell(\boldsymbol{\lambda}_{j})!}{\prod_{k=1}^{\infty}(d^{\boldsymbol{\lambda}_{j}}_{k}!)}\right) x^\mathbf{m}$$ \qed
 
\end{cor}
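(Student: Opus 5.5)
The plan is to derive Corollary~\ref{racg} directly from Corollary~\ref{corracg} and Theorem~\ref{peomarked}. By Corollary~\ref{corracg}, the coefficient of $\mathbf x^{\mathbf m}$ in $H_{RACG(\mathcal{G})}(\bold x)$ is $(-1)^{|\mathbf m|}\,{_{\bold m}\Pi^{\mathrm{mark}}_\mathcal{G}}(-1)$. Theorem~\ref{peomarked} gives ${_{\bold m}\Pi^{\mathrm{mark}}_\mathcal{G}}(q)$ as a polynomial identity in $q$: it is stated for $q\in\mathbb N$, but both sides are polynomials in $q$ (the left side by Proposition~\ref{props6}). Hence it remains valid at $q=-1$, so it suffices to evaluate each factor
\[
\binom{q-b^{\boldsymbol\lambda}_j}{\ell(\boldsymbol\lambda_j)}
\]
at $q=-1$.

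The key step is the negation identity $\binom{-n}{k}=(-1)^k\binom{n+k-1}{k}$, valid as a polynomial identity in $n$. Set $n=1+b^{\boldsymbol\lambda}_j$ and $k=\ell(\boldsymbol\lambda_j)$. Then
\[
\binom{-1-b^{\boldsymbol\lambda}_j}{\ell(\boldsymbol\lambda_j)}=(-1)^{\ell(\boldsymbol\lambda_j)}\binom{b^{\boldsymbol\lambda}_j+\ell(\boldsymbol\lambda_j)}{\ell(\boldsymbol\lambda_j)}.
\]
Since $b^{\boldsymbol\lambda}_j=\sum_{i\in\mathcal G_j\setminus\{j\}}\ell(\boldsymbol\lambda_i)$, we get $b^{\boldsymbol\lambda}_j+\ell(\boldsymbol\lambda_j)=\sum_{i\in\mathcal G_j}\ell(\boldsymbol\lambda_i)$, which is exactly the upper entry in the statement. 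This works because $j\in\mathcal G_j$ by the definition of $\mathcal G_j$.

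Taking the product over $j\in\supp(\mathbf m)$ collects the sign $(-1)^{\sum_j\ell(\boldsymbol\lambda_j)}$. The factor $\ell(\boldsymbol\lambda_j)!/\prod_k d_k^{\boldsymbol\lambda_j}!$ does not depend on $q$ and passes through unchanged. Multiplying by $(-1)^{|\mathbf m|}$ from Corollary~\ref{corracg} gives the stated coefficient for every $\mathbf m$ with nonempty support.

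For $\mathbf m=0$, the set $S(\mathbf 0)$ consists only of the empty tuple, and the empty product gives coefficient $1$. This matches the constant term of $H_{RACG}$.

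The computation is routine. The only point needing care is the justification for evaluating the polynomial identity at a negative integer, and Proposition~\ref{props6} supplies it.
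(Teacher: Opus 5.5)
Your proposal is correct and is exactly the argument the paper intends: it states the corollary without proof as an immediate consequence of Corollary~\ref{corracg} and the PEO formula for ${}_{\mathbf m}\Pi^{\mathrm{mark}}_{\mathcal G}(q)$, evaluated at $q=-1$ using $\binom{-1-b}{\ell}=(-1)^{\ell}\binom{b+\ell}{\ell}$. You also justify evaluating at $q=-1$ (both sides are polynomials agreeing on $\mathbb N$) and check the $\mathbf m=0$ term, which the paper leaves implicit.
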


\subsection{}
In the following theorem, we  determine the double-marked chromatic polynomials for $PEO$ graphs (which include finite chordal graphs).

\begin{thm}\label{peomarked}
    Let $\mathcal{G}$ be a PEO-graph with a countable vertex set $I$. We have
     \begin{equation}
  {_\bold m}\Pi_\mathcal G^{2}(q)=\sum\limits_{\boldsymbol{\lambda}\in S(\bold m)}2^{L(\lambda)}\prod _{j\in \mathrm{supp}(\bold{m})} \binom{q-b^{\boldsymbol{\lambda}}_{j}}{\ell(\boldsymbol{\lambda}_{j})} \frac{\ell(\boldsymbol{\lambda}_{j})!}{\prod_{k=1}^{\infty}(d^{\boldsymbol{\lambda}_{j}}_{k}!)},\ \ b^{\boldsymbol{\lambda}}_{j}=\sum\limits_{\substack{i\in \mathcal{G}_j\backslash\{j\}}}\ell(\boldsymbol{\lambda}_{i})
        \end{equation}
        \end{thm}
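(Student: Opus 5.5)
The plan is to reduce the statement to the proposition expressing ${_\bold m}\Pi_\mathcal G^{2}(q)$ as a sum of ordinary chromatic polynomials of the joins $\mathcal G(s(\boldsymbol\lambda))$. That proposition gives
\[
{_\bold m}\Pi_\mathcal G^{2}(q)=\sum_{\boldsymbol\lambda\in S(\mathbf m)}2^{L(\boldsymbol\lambda)}\frac{\pi_{\mathcal G(s(\boldsymbol\lambda))}(q)}{\prod_{j\in\supp(\mathbf m)}\prod_{k\ge1}d_k^{\boldsymbol\lambda_j}!}.
\]
Comparing with the claimed formula term by term, it suffices to prove, for each fixed $\boldsymbol\lambda\in S(\mathbf m)$,
\[
\pi_{\mathcal G(s(\boldsymbol\lambda))}(q)=\prod_{j\in\supp(\mathbf m)}\binom{q-b^{\boldsymbol\lambda}_j}{\ell(\boldsymbol\lambda_j)}\,\ell(\boldsymbol\lambda_j)!,
\]
that is, $\pi_{\mathcal G(s(\boldsymbol\lambda))}(q)=\prod_j (q-b_j)(q-b_j-1)\cdots(q-b_j-\ell(\boldsymbol\lambda_j)+1)$. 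This identity involves no signs, so it is exactly the ingredient behind Theorem \ref{peomarked} for marked chromatic polynomials. Indeed, comparing that theorem with the marked analogue of the proposition already gives the claimed identity after summing. I would nevertheless prove it directly, so as not to depend on extracting it from \cite{CDV}.

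For the direct argument, restrict to the finite induced subgraph on $\supp(\mathbf m)$; vertices with $m_i=0$ contribute empty cliques. The perfect elimination ordering restricts to a perfect elimination ordering of this subgraph. Then color the cliques of $\mathcal G(s(\boldsymbol\lambda))$ greedily in increasing order of $j$. When we reach the $j$-th clique, which has $\ell(\boldsymbol\lambda_j)$ vertices, its earlier neighbours are exactly the vertices in the cliques indexed by $\mathcal G_j\setminus\{j\}$. Since $\mathcal G_j$ is complete, these cliques together with the $j$-th clique form a clique in $\mathcal G(s(\boldsymbol\lambda))$. So the $b^{\boldsymbol\lambda}_j$ earlier neighbours all carry pairwise distinct colors, and exactly $b^{\boldsymbol\lambda}_j$ colors are forbidden.

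Hence the vertices of the $j$-th clique can be colored injectively in $(q-b_j)_{\ell(\boldsymbol\lambda_j)}$ ways, independently of the earlier choices. Multiplying over $j$ gives the falling-factorial product, valid for all $q\in\mathbb N$. Substituting back into the proposition yields the theorem.

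The main obstacle is verifying carefully that the earlier neighbours of the $j$-th clique are pairwise adjacent in the join graph. Cliques coming from distinct vertices $i,i'$ of $\mathcal G_j\setminus\{j\}$ are fully joined because $\{i,i'\}\in E$, by the completeness of $\mathcal G_j$. Vertices within a single clique are adjacent by construction. Countability of $I$ is harmless here, since only the finite support matters.
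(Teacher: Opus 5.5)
Your proof is correct, but it is organized differently from the paper's. The paper never uses the join graphs. For a fixed $\boldsymbol\lambda\in S(\mathbf m)$ it counts double-marked multi-colorings of that type directly along the perfect elimination ordering of $\mathcal G$. At vertex $n$ it chooses $\ell(\boldsymbol\lambda_n)$ colors among the $q-b^{\boldsymbol\lambda}_n$ not used on the clique $\mathcal G_n\setminus\{n\}$. It then assigns the parts of $\boldsymbol\lambda_n$ to these colors in $\ell(\boldsymbol\lambda_n)!/\prod_k d^{\boldsymbol\lambda_n}_k!$ ways, and the signs in $2^{\ell(\boldsymbol\lambda_n)}$ ways.

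You instead take the earlier proposition ${_\mathbf m}\Pi^{2}_{\mathcal G}(q)=\sum_{\boldsymbol\lambda}2^{L(\boldsymbol\lambda)}\pi_{\mathcal G(s(\boldsymbol\lambda))}(q)/\prod_{i}\prod_r d^{\boldsymbol\lambda_i}_r!$ as your decomposition. The signs and the symmetry factor are then handled once, in that proposition. What remains is a sign-free lemma: the blow-up $\mathcal G(s(\boldsymbol\lambda))$ inherits a perfect elimination ordering (cliques in the given order, arbitrary order inside each clique). Hence its chromatic polynomial is $\prod_j (q-b^{\boldsymbol\lambda}_j)(q-b^{\boldsymbol\lambda}_j-1)\cdots(q-b^{\boldsymbol\lambda}_j-\ell(\boldsymbol\lambda_j)+1)$.

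Both proofs rest on the same fact: the earlier neighbours form a clique, so exactly $b^{\boldsymbol\lambda}_j$ colors are forbidden. Your version is more modular. The same lemma, fed into the marked analogue of the proposition, gives the marked theorem as well, and it makes plain that $2^{L(\boldsymbol\lambda)}$ is pure bookkeeping. The paper's count is self-contained and does not need the intermediate proposition.

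One caution about your aside. Equality of the two $\boldsymbol\lambda$-sums in the marked case does not by itself give the termwise identity you need. The weights $2^{L(\boldsymbol\lambda)}$ vary with $\boldsymbol\lambda$, so that comparison could not replace your direct argument. Since you prove the identity directly, nothing depends on it.
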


        \begin{proof}

     Let $\mathrm{supp}(\bold{m})=\{i_1<\dots<i_N\}$ with respect to the chosen PEO-ordering. It is enough to consider the induced subgraph on  $\mathrm{supp}(\bold{m})$, which we relabel as $\{1,\dots,N\}$. For simplicity, we call this subgraph in the rest of the proof also by $\mathcal{G}$. Inheriting the induced ordering of the graph, this is again a PEO-graph. We will consider colorings corresponding to $\boldsymbol{\lambda}\in S(\bold m)$, that has the form  
     \begin{equation}\label{form}{_{\bold{m}}\Gamma^{\mathrm{mark}}_\mathcal{G}}(i)=\{c_1^{\lambda^1_i},\dots,c_{r_i}^{\lambda^{r_i}_i}\},\ \text{ $\boldsymbol{\lambda}_i=(\lambda^1_i\ge \cdots \ge \lambda^{r_i}_i>0)$}\end{equation}
     for some $c_1,\dots,c_{r_i}\in\{1,\dots,q\}.$
     
 To color the first vertex, we have 
     $$\binom{q}{\ell(\boldsymbol{\lambda}_{1})}\frac{\ell(\boldsymbol{\lambda}_{1})!}{\prod_{k=1}^{\infty}(d^{\boldsymbol{\lambda}_{1}}_k!)}$$ possibilities. To obtain double-marked multi-coloring we have to assign sign from $\{+,-\}$ to each distinct color appearing at $1$. Since there are $\ell(\lambda_1)$ distinct colors at the vertex $1$, there are $2^{\ell(\lambda_1)}$ independent choices of signs at the vertex $1.$ %  Let $j_1<\dots<j_p$ are the vertices in $I_N$ such that $j_m\in\Psi_0$ for all $m\in\{1,\dots,p\}$. First we will consider the coloring corresponding to $\bar{\lambda}=(\lambda_i)\in S(\bold m)$. When coloring the vertices of $\mathcal{G}(I_N)$, if the first vertex corresponds to odd isotropic root ($1\in\Psi_0)$ then we can color this vertex with $l(\lambda_{j_1})$ distinct colors where $l(\lambda_{j_1})$ is the length of a partition $\lambda_{j_1}=(\lambda^1_{j_1}\geq \dots\geq\lambda^{k_{j_1}}_{j_1})$ of $m_{j_1}=m_1$. We have to arrange $l(\lambda_{j_1})$ distinct colors such that the first placed color repeats $\lambda^1_{j_1}$ times, second placed color   repeats $\lambda^2_{j_1}$ times, and so on $k_{j_1}^{th}$ placed color repeats $\lambda^{k_{j_1}}_{j_1}$ times, but there can be some $k=\lambda^s_{j_1}$  repeats $q^{\lambda_{j_1}}_k$ number of times in $\lambda_{j_1}$, so we can color the first vertex with $\frac{\Perm{q}{l(\lambda_{j_1})}}{\prod_{t=1}^{\infty}q^{\lambda_{j_1}}_t!}$ ways, where $q^{\lambda_{j_1}}_t$ be the number of times $t$ appears in the partition  $\lambda_{j_1}$. If the first vertex $1\notin \Psi_0$, then we can choose  $m_1$ colors from the given $q$ colors and can color it. 
   Now suppose that we have colored the first $n-1$ vertices, $2\leq n<N$. Note that the subgraph $\mathcal{G}_n$ is complete. So to color vertex $n$, we cannot use the colors that were used to color the vertices in $\mathcal{G}_n \setminus \{n\}$. Thus, we must choose $\ell(\boldsymbol{\lambda}_n)$ colors out of the remaining $q - b^{\boldsymbol{\lambda}}_{n}$, and we get%So now to color vertex $n$ we can not use the colors that are used to color the vertices in $\mathcal{G}_n\backslash\{n\}$. Therefore we have to choose $\ell(\boldsymbol{\lambda}_n)$ colors out of the remaining $q-b^{\boldsymbol{\lambda}}_{n}$ and we get 
   $$\binom{q-b^{\boldsymbol{\lambda}}_{n}}{\ell(\boldsymbol{\lambda}_n)}\frac{\ell(\boldsymbol{\lambda}_n)!}{\prod_{k=1}^{\infty}(d^{\boldsymbol{\lambda}_{n}}_k!)}$$
   choices to color vertex $n$. Now to obtain a double-marked multi-coloring, we assign a sign from $\{+, -\}$ to each distinct color appearing at vertex $n$. Since there are $\ell(\boldsymbol{\lambda}_n)$ distinct colors at vertex $n$, there are $2^{\ell(\boldsymbol{\lambda}_n)}$ independent choices of signs at vertex $n$. Therefore, the total number of double-marked multi-colorings corresponding to $\boldsymbol{\lambda}$ is given by
$$2^{L(\boldsymbol{\lambda})} \prod_{j \in \mathrm{supp}(\mathbf{m})} \binom{q - b^{\boldsymbol{\lambda}}_{j}}{\ell(\boldsymbol{\lambda}_{j})} \frac{\ell(\boldsymbol{\lambda}_{j})!}{\prod_{k=1}^{\infty}(d^{\boldsymbol{\lambda}_{j}}_{k}!)}.$$
Taking the sum over all such partitions $\boldsymbol{\lambda} \in S(\mathbf{m})$ gives the result.

        \end{proof}

In the following corollary we obtain the Growth series of RAAG for PEO-graphs.
\begin{cor}\label{peohilbert}
 Let $\mathcal{G}$ be a PEO-graph with a countable vertex set $I$. Then we have 
 $$H_{\RAAG(\mathcal G)}(\mathbf x)=\sum\limits_{\boldsymbol{m}\in \mathbb{Z}^I_+}\left(\sum\limits_{\boldsymbol{\lambda}\in S(\bold m)}2^{L(\boldsymbol{\lambda})}(-1)^{|\mathbf{m}|+\sum_{j\in \mathrm{supp}(\bold{m})}\ell(\boldsymbol{\lambda}_{j})}\prod _{j\in \mathrm{supp}(\bold{m})} \binom{\sum\limits_{i\in \mathcal{G}_j}\ell(\boldsymbol{\lambda}_{i})}{\ell(\boldsymbol{\lambda}_{j})} \frac{\ell(\boldsymbol{\lambda}_{j})!}{\prod_{k=1}^{\infty}(d^{\boldsymbol{\lambda}_{j}}_{k}!)}\right) \bold x^\mathbf{m}$$ \qed
 
\end{cor}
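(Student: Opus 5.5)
The plan is to combine the two results just established. By the corollary following the double-marked analogue of \cite[Theorem 1]{CDV}, we have
\[
H_{\RAAG(\mathcal G)}(\mathbf x)=\sum_{\mathbf m}(-1)^{|\mathbf m|}\,{_\mathbf{m}}\Pi_\mathcal G^{2}(-1)\,\mathbf x^{\mathbf m}.
\]
Here ${_\mathbf{m}}\Pi_\mathcal G^{2}(q)$ is a polynomial in $q$ by Proposition \ref{prop2}. For a PEO-graph, the preceding theorem gives it as the explicit polynomial
\[
\sum_{\boldsymbol\lambda\in S(\mathbf m)}2^{L(\boldsymbol\lambda)}\prod_{j}\binom{q-b^{\boldsymbol\lambda}_j}{\ell(\boldsymbol\lambda_j)}\frac{\ell(\boldsymbol\lambda_j)!}{\prod_k d_k^{\boldsymbol\lambda_j}!}.
\]
That formula is first proved for $q\in\mathbb N$. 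Both sides are polynomials in $q$ and agree at infinitely many points, so they agree identically. We may therefore evaluate at $q=-1$.

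The only computation is the evaluation of each binomial factor at $q=-1$. We use the identity $\binom{-n}{k}=(-1)^k\binom{n+k-1}{k}$, valid for $n\ge 1$ and $k\ge0$. With $n=1+b^{\boldsymbol\lambda}_j$ and $k=\ell(\boldsymbol\lambda_j)$ this gives
\[
\binom{-1-b^{\boldsymbol\lambda}_j}{\ell(\boldsymbol\lambda_j)}=(-1)^{\ell(\boldsymbol\lambda_j)}\binom{b^{\boldsymbol\lambda}_j+\ell(\boldsymbol\lambda_j)}{\ell(\boldsymbol\lambda_j)}.
\]
Since $b^{\boldsymbol\lambda}_j=\sum_{i\in\mathcal G_j\setminus\{j\}}\ell(\boldsymbol\lambda_i)$, the upper entry equals $\sum_{i\in\mathcal G_j}\ell(\boldsymbol\lambda_i)$.

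We then take the product over $j\in\supp(\mathbf m)$. This collects the sign $(-1)^{\sum_j\ell(\boldsymbol\lambda_j)}$, which combines with $(-1)^{|\mathbf m|}$ to give exactly the stated coefficient. Alternatively, one can invoke the corollary to Theorem \ref{mainthm} together with $H_{\RAAG}=I^2_{mark}(\mathcal G,-\mathbf x)^{-1}$ and substitute $\mathbf x\mapsto-\mathbf x$, $q=-1$ in the double-marked theorem. This reproduces the same sign $(-1)^{|\mathbf m|}$.

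The main obstacle is justifying the extension from $q\in\mathbb N$ to $q=-1$. The double-marked theorem is stated for all $q\in\mathbb Z$, with ${_\mathbf{m}}\Pi^2_\mathcal G(q)$ read as the polynomial from Proposition \ref{prop2}. The PEO formula, however, is proved combinatorially only for nonnegative $q$. The argument for this step is the polynomial identity principle above, plus a check that each $\binom{q-b}{\ell}$ is read as the polynomial $(q-b)(q-b-1)\cdots(q-b-\ell+1)/\ell!$. That reading is what makes the negative-argument binomial identity apply.
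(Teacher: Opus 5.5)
Your proposal is correct and follows the same route the paper intends; the corollary is marked \qed as immediate. It combines $H_{\RAAG(\mathcal G)}(\mathbf x)=\sum_{\mathbf m}(-1)^{|\mathbf m|}\,{_{\mathbf{m}}}\Pi^{2}_{\mathcal{G}}(-1)\,\mathbf x^{\mathbf m}$ with the PEO formula for ${_{\mathbf{m}}}\Pi^{2}_{\mathcal{G}}(q)$ evaluated at $q=-1$ via $\binom{-1-b}{\ell}=(-1)^{\ell}\binom{b+\ell}{\ell}$. Your polynomial-identity argument for passing from $q\in\mathbb N$ to $q=-1$ is sound, since for $q\ge|\mathbf m|$ the combinatorial and polynomial readings of the binomials coincide; the paper leaves this step implicit.
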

For large families of graphs, such as PEO-graphs, the formulas in Corollary~\ref{racg} and Corollary~\ref{peohilbert} are closed and computable, allowing explicit computation of the growth series of RACGs and RAAGs in these cases.

\section{Closed-Form Growth Series for special classes of $PEO$ Graphs}\label{sect7}
In this section, we apply Corollary~\ref{peohilbert} to derive explicit multivariate growth series for standard families of $PEO$ graphs.

\subsection{Complete Graphs}

Let $K_n$ be the complete graph on $n$ vertices equipped with the ordering $1 < 2 < \dots < n$. Since $N(j) \cap \{1, \dots, j-1\} = \{1, 2, \dots, j-1\}$ for every $j$, we have $\mathcal{G}_j = \{1, 2, \dots, j\}$. Substituting this into Corollary~\ref{peohilbert}, the growth series for $\mathrm{RAAG}(K_n) $ is given by
\begin{equation*}
H_{\mathrm{RAAG}(K_n)}(\mathbf{x})[\mathbf{x}^{\mathbf{m}}] =  \sum_{\boldsymbol{\lambda} \in S(\mathbf{m})} 2^{L(\boldsymbol{\lambda})} (-1)^{|\mathbf{m}| + L(\boldsymbol{\lambda})} \left(\prod_{j \in \mathrm{supp}(\mathbf{m})} \binom{\sum_{i=1}^{j} \ell(\boldsymbol{\lambda}_{i})}{\ell(\boldsymbol{\lambda}_{j})} \frac{\ell(\boldsymbol{\lambda}_{j})!}{\prod_{k=1}^{\infty} \left(d^{\boldsymbol{\lambda}_{j}}_{k}!\right)}\right).
\end{equation*}

\subsection{Trees}

Every finite connected tree $T = (I, E)$ is chordal and therefore admits a perfect elimination ordering. Formally, a $PEO$ $1 < 2 < \dots < n$ on $T$ can be constructed by reversing a simplicial elimination sequence, where at each step a vertex of degree $1$ in the remaining induced subgraph is removed. 

Consequently, for any $j \in \{2, \dots, n\}$, the set of preceding neighbors $N(j) \cap \{1, \dots, j-1\}$ contains exactly one vertex, which we denote by $\pi(j) \in \{1, \dots, j-1\}$. Therefore, the  set $\mathcal{G}_j = \{\pi(j), j\}$  has cardinality $2$ for all $j > 1$. Setting $\ell(\boldsymbol{\lambda}_{\pi(1)}) = 0$ by convention, Corollary~\ref{peohilbert} reduces to
\begin{equation*}
H_{\mathrm{RAAG}(T)}(\mathbf{x})[\mathbf{x}^{\mathbf{m}}] = \sum_{\boldsymbol{\lambda} \in S(\mathbf{m})} 2^{L(\boldsymbol{\lambda})} (-1)^{|\mathbf{m}| + L(\boldsymbol{\lambda})} \left(\prod_{j \in \mathrm{supp}(\mathbf{m})} \binom{\ell(\boldsymbol{\lambda}_{\pi(j)}) + \ell(\boldsymbol{\lambda}_{j})}{\ell(\boldsymbol{\lambda}_{j})} \frac{\ell(\boldsymbol{\lambda}_{j})!}{\prod_{k=1}^{\infty} \left(d^{\boldsymbol{\lambda}_{j}}_{k}!\right)} \right).
\end{equation*}

\subsection{Paths and Star Graphs}

Since path graphs and star graphs are specialized trees, their growth series follow directly from the tree formula.

\begin{example}[Path Graphs]
Let $P_n$ be a path graph on $n$ vertices with the standard elimination ordering $1 < 2 < \dots < n$. For each $j \ge 2$, the unique preceding neighbor is $\pi(j) = j-1$. The growth series is given by
\begin{equation*}
H_{\mathrm{RAAG}(P_n)}(\mathbf{x})[\mathbf{x}^{\mathbf{m}}] = \sum_{\boldsymbol{\lambda} \in S(\mathbf{m})} 2^{L(\boldsymbol{\lambda})} (-1)^{|\mathbf{m}| + L(\boldsymbol{\lambda})}  \left(\prod_{j \in \mathrm{supp}(\mathbf{m})} \binom{\ell(\boldsymbol{\lambda}_{j-1}) + \ell(\boldsymbol{\lambda}_{j})}{\ell(\boldsymbol{\lambda}_{j})} \frac{\ell(\boldsymbol{\lambda}_{j})!}{\prod_{k=1}^{\infty} \left(d^{\boldsymbol{\lambda}_{j}}_{k}!\right)} \right),
\end{equation*}
with $\ell(\boldsymbol{\lambda}_{0})= 0$ by convention.
\end{example}

\begin{example}[Star Graphs]
Let $K_{1, n-1}$ be a star graph. Assign the central vertex (of degree $n-1$) to position $1$, and the remaining simplicial vertices (leaves of degree $1$) to positions $2, \dots, n$. Under this ordering, $\pi(j) = 1$ for all $j \ge 2$, yielding
\begin{equation*}
H_{\mathrm{RAAG}(K_{1, n-1})}(\mathbf{x})[\mathbf{x}^{\mathbf{m}}] =  \sum_{\boldsymbol{\lambda} \in S(\mathbf{m})} 2^{L(\boldsymbol{\lambda})} (-1)^{|\mathbf{m}| + L(\boldsymbol{\lambda})} \left( \frac{\ell(\boldsymbol{\lambda}_{1})!}{\prod_{k=1}^{\infty} \left(d^{\boldsymbol{\lambda}_{1}}_{k}!\right)}\prod_{j=2}^n\binom{\ell(\boldsymbol{\lambda}_{1}) + \ell(\boldsymbol{\lambda}_{j})}{\ell(\boldsymbol{\lambda}_{j})} \frac{\ell(\boldsymbol{\lambda}_{j})!}{\prod_{k=1}^{\infty} \left(d^{\boldsymbol{\lambda}_{j}}_{k}!\right)} \right) \mathbf{x}^{\mathbf{m}}.
\end{equation*}
\end{example}

\subsection{Edgeless Graphs}

Let $\overline{K_n}$ be the edgeless graph on $n$ vertices. Any ordering $$1 < 2 < \dots < n$$ is a PEO. Since $N(j) = \emptyset$ for all $j$, we have $\mathcal{G}_j = \{j\}$. The binomial coefficient reduces to $\binom{\ell(\boldsymbol{\lambda}_j)}{\ell(\boldsymbol{\lambda}_j)} = 1$, and the growth series for $\mathrm{RAAG}(\overline{K_n}) \cong \mathbb{Z}^n$ is given by 
\begin{equation*}
H_{\mathrm{RAAG}(\overline{K_n})}(\mathbf{x})[\mathbf{x}^{\mathbf{m}}] = \sum_{\boldsymbol{\lambda} \in S(\mathbf{m})} 2^{L(\boldsymbol{\lambda})} (-1)^{|\mathbf{m}| + L(\boldsymbol{\lambda})}  \left(\prod_{j \in \mathrm{supp}(\mathbf{m})} \frac{\ell(\boldsymbol{\lambda}_{j})!}{\prod_{k=1}^{\infty} \left(d^{\boldsymbol{\lambda}_{j}}_{k}!\right)} \right) \mathbf{x}^{\mathbf{m}}.
\end{equation*}

\end{document}